\definecolor{darkblue}{rgb}{0.3,0.3,0.7}
\definecolor{bucheron}{RGB}{250,100,24}
\DeclareMathAlphabet{\eufrak}{U}{}{}{}  % Euler fraktur math
\SetMathAlphabet\eufrak{normal}{U}{euf}{m}{n}
\SetMathAlphabet\eufrak{bold}{U}{euf}{b}{n}
\newtheorem{prop}{Proposition}[section]
\newtheorem{theorem}[prop]{Theorem}
\newtheorem{lemma}[prop]{Lemma}
\newtheorem{corollary}[prop]{Corollary}
\newtheorem{proposition}[prop]{Proposition}
\theoremstyle{definition}
\newtheorem{remark}[prop]{Remark}
\newtheorem{assumption}[prop]{Assumption}
\newcounter{Fact} % Crée un nouveau compteur "Definition", dont la valeur initiale est 0.
\numberwithin{equation}{section}
\newcommand{\pp}{\leqslant}      % inférieur ou égal
\renewcommand{\textless}{\langle}  % produit scalaire (gauche)
\renewcommand{\textgreater}{\rangle}  % produit scalaire (droite)
\newcommand{\id}[2]{[\![#1,#2]\!]} %intervalle discret \{\}
\newcommand{\f}[1]{\EuScript{F}_{#1}}   % style F
\newcommand{\g}[1]{\EuScript{G}_{#1}} % style G
\newcommand{\Bi}{\mathfrak b} % Bi
\newcommand{\cA}{\mathcal A} % \mathcal A
\newcommand{\cB}{\mathcal B} % \mathcal B
\newcommand{\sG}{\mathscr G} % \mathscr G
\newcommand{\sP}{\mathscr P} % \mathscr P
\newcommand{\sS}{\mathscr S} % \mathscr S
\newcommand{\D}{\Delta} % Delta
\newcommand{\F}{\EuScript{F}} % tribu F
\newcommand{\G}{\EuScript{G}} % tribu G
\newcommand{\HH}{\EuScript{H}} % tribu H
\newcommand{\Cm}{\mathrm C} % C droit
\newcommand{\cm}{\mathrm c} % c droit
\newcommand{\Xm}{\mathrm X} % X droit
\newcommand{\Dm}{\mathrm D} % D droit
\newcommand{\Um}{\mathrm U} % U droit
\newcommand{\Ym}{\mathrm Y} % Y droit
\newcommand{\Zm}{\mathrm Z} % Z droit
\newcommand{\Fm}{\mathrm F} % F droit
\newcommand{\Gm}{\mathrm G} % G droit
\newcommand{\Km}{\mathrm K} % K droit
\newcommand{\Rm}{\mathrm R} % R droit
\newcommand{\Nm}{\mathrm N} % N droit
\newcommand{\Vm}{\mathrm V} % V droit
\newcommand{\gP}{\mathbf P} % probabilité P
\newcommand{\Q}{\mathbf Q} % probabilité Q
\newcommand{\M}{\mathbf M} % probabilité Q
\newcommand{\St}{\overline{\mathrm S}} % S droit
\newcommand{\Vt}{\overline{\mathrm V}} % V tilde
\newcommand{\Sm}{\mathrm S} % S droit
\newcommand{\q}{\mathbf q} %  q gras
\newcommand{\Xb}{\mathbb X} % IX
\newcommand{\Lm}{\mathrm L} % L droit
\newcommand{\A}{\mathrm A} % A droit
\newcommand{\B}{\mathrm B} % B droit
\newcommand{\vp}{\varphi} % varphi
\newcommand{\fk}{\mathfrak f} % mathfrak f
\newcommand{\car}{\mathbf 1} % fonction carateristique
\newcommand{\gE}{\mathbf E} % espérance
\newcommand{\esp}[1]{\mathbf E\left[#1\right]} %espérance
\newcommand{\R}{\mathbb{R}}
\newcommand{\N}{\mathbb{N}}
\newcommand{\bM}{{\mathbf M}}
\newcommand{\gPs}{\widehat{\mathbf P}}
\newcommand{\gPb}{\mathbf P^{\mathfrak b}}
\newcommand{\gPc}{\mathbf P^{\mathfrak c}}
\newcommand{\wPb}{\widehat{\mathbf P}^{\Bi}}
\newcommand{\wP}{\widehat{\mathbf P}}
\newcommand{\wQb}{\widehat{\mathbf Q}^{\Bi}}
\newcommand{\wQ}{\widehat{\mathbf Q}}
\newcommand{\pow}{\mathbf{pow}}
\begin{document}

\title{The insider trading problem in a jump-binomial model.
}

\author{Hélène Halconruy \footnote{Léonard de Vinci Pôle universitaire, Research Center, Paris La Défense, France.} \footnote{Modal'X, Université Paris Nanterre, Nanterre, France. \textit{e-mail adress:} helene.halconruy@devinci.fr}}

\date{}

\maketitle

%\maketitle

\begin{abstract} 
We study insider trading in a jump-binomial model of the financial market that is based on a marked binomial process and that serves as a suitable alternative to some classical trinomial models. 
Our investigations focus on the two main questions : measuring the advantage of the insider's additional information and stating a closed form for her hedging strategy. Our approach is based on the results of enlargement of filtration in a discrete setting stated by Blanchet-Scalliet and Jeanblanc \cite{Blanchet_Jeanblanc_2020} and on a stochastic analysis for marked binomial processes developed in the companion paper \cite{Halconruy_2021_binomial}. Our work provides in a discrete-time and an incomplete market setting the analogues of some results of Amendinger \textit{et al.} \cite{Amendinger_2000, Amendinger_Becherer_Schweizer_2003}, Imkeller \textit{et al.} \cite{Imkeller_1996,Imkeller_2003} and extends in an insider framework some utility maximization results stated in Delbaen and Schachermayer \cite{Delbaen_Schachermayer} and in Runggaldier \cite{Runggaldier_2006}.
\vspace{5pt}\\

\noindent
\textbf{Keywords} Insider trading, trinomial model, enlargement of filtrations, Malliavin's calculus, utility maximization.\vspace{2pt}\\
\noindent
\textbf{PACS} 60J74, 60G55, 91G20, 60H30,60H07, 91G10, 94A17. \vspace{2pt}\\
\noindent
\textbf{JEL Classification} G11, G14, C61, C02.
\end{abstract}

\noindent
\textbf{Acknowledgements}
This project has received funding from the European Union’s Horizon 2020 research and innovation programme under grant agreement $\Nm^\circ 811017$. I warmly thank Giovanni Peccati for his recommendations and advice. I am very grateful to Monique Jeanblanc for her invaluable help and the motivating discussions while revising this article.

%\color{blue}
\tableofcontents

\section{Introduction}

This paper addresses some aspects of insider trading in a jump-binomial model of the financial market that is comparable to some trinomial models (defined in Boyle \textit{et al.} \cite{Boyle_1988,Boyle_Kirzner_1985}).
Throughout the trading period, the insider has access to hidden additional information encapsulated in a random variable G, the outcome of which she knows from the outset.
We approach the following questions from two different perspectives. First, we focus on the additional information itself and attempt to quantify the benefits it provides. Second, we adopt the insider's perspective to establish an \textit{optimal} hedging formula for certain replicable claims.\\

The model we consider involves two investors: an ordinary agent and an insider. Both are assumed to be small enough not to impact market prices, and the insider has exclusive, advantageous confidential information right from the start.
From the perspective of martingale theory adopted in this paper, the extra information is hidden in a random variable G, the outcome of which is known by the insider at the beginning of the trading interval. As a result, the insider's level of information is described by a filtration $\G$ that is larger than $\F$, which describes the ordinary agent’s level of information. This framework is naturally connected to the theory of \textit{enlargement of filtration}, which can be roughly classified into two distinct approaches: the \textit{initial enlargement} approach under Jacod's hypothesis, assuming equivalence between the conditional laws of G with respect to $\F$ and the law of G (see Jacod \cite{Jacod_1985}), and the \textit{progressive enlargement approach} (see Barlow \cite{Barlow_1978}, Jeulin and Yor \cite{Jeulin_Yor_1978}). All related results extend immediately to a discrete time setting, as highlighted by Blanchet-Scalliet, Jeanblanc in \cite{Blanchet_Jeanblanc_2020} and with Romero in \cite{Blanchet_Jeanblanc_Romero_2019}, most of them simply stemming from Doob’s decomposition. \\
The theory partly owes its success to applications in finance and notably to insider trading problems (see Kohatsu-Higa \cite{Kohatsu_2004}). One of the questions that arises is how to optimize the insider's expected utility and quantify her benefit. This is studied in Pikovsky and Karatzas \cite{Pikovsky_Karatzas}, Amendinger \textit{et al.} through  \cite{Amendinger_2000} and \cite{Amendinger_Becherer_Schweizer_2003}, as well as in Grorud and Pontier \cite{Grorud_Pontier_1998}. In \cite{Amendinger_Imkeller_Schweizer_1998} and \cite{Ankirchner_Dereich_Imkeller_2006}, Imkeller \textit{et al.} discover a crucial link between the insider's additional logarithmic utility and information theory by identifying it with the Shannon entropy of the extra information. In \cite{Imkeller_2003}, Imkeller connects these notions to Malliavin calculus by expressing the information drift as the logarithmic Malliavin trace of a conditional density characterizing the insider's advantage. Ankirchner \textit{et al.} describe in \cite{Ankirchner_Dereich_Imkeller_2006} the same information drift in a general setting and link it to the measure of the different levels of information contained in the agent and insider's filtrations. Finally, we would like to mention the comprehensive book (and the references therein) by Hillairet and Jiao \cite{Hillairet_Jiao}, which offers an exhaustive review of optimization results with exotic filtrations, especially in the context of insider trading.\\

In a related stream of research, insider trading appears as a byproduct of portfolio management issues (see Biagini and \O ksendal \cite{Biagini_Oksendal_2005,Biagini_Oksendal_2006}). An extensive literature on related topics (see Shreve \cite{Shreve_2005}, Pascucci and Runggaldier \cite{Pascucci_Runggaldier_2012}) is available in the most famous complete discrete-time market model, namely, the \textit{Cox-Ross-Rubinstein} or \textit{binomial} model. All claims are replicable in this model, and Privault provides an explicit formula of the hedging strategy in terms of the discrete Malliavin derivative for Rademacher processes (see  \cite{Privault_stochastica}, chapter 1 or \cite{Privault_2013}).\\
Even if the trinomial model is an interesting case study as the simplest incomplete market in discrete time, research about portfolio management in this frame is scarcer. We can nevertheless mention the books of and Schachermayer \cite{Delbaen_Schachermayer} or Bj\"orefeldt \textit{et al.} \cite{Bjorefeldt_Hee_Malmgard_Niklasson_Pettersson_Rado_2016}, the survey of Runggaldier \cite{Runggaldier_2006}, the work of Dai and Lyuu \cite{Dai_Lyuu_2010} and that of Glonti \textit{et al.} \cite{Glonti_Jamburia_Kapanadze_Khechinashvili_2002}. From a slightly different perspective, a hedger can aim at maximizing her expected utility from the terminal wealth for a given utility function. A very popular method is based on the formulation of a dual problem; the reader can refer to the survey of Schachermayer \cite{Schachermayer_2001} and the reference book of Delbaen and Schachermayer \cite{Delbaen_Schachermayer}. The same question of utility optimization has also been addressed in incomplete markets in a "classical" sense (Hu, Imkeller and Muller \cite{Hu_Imkeller_Muller_2005}) and, more recently, within other types of incompleteness such as that arising from friction (see Bouchard and Nutz \cite{Bouchard_Nutz_2015}, Neufeld and Sikic \cite{Neufeld_Sikic_2018}) or from uncertainty (see Nutz \cite{Nutz_2016}, Rasonyi and Meireles  \cite{Rasonyi_Meireles_2021}, Obl\'oj and Wiesel \cite{Obloj_Wiesel_2021}).
\vspace{3pt}\\

We present our results from two consecutive angles, each addressed in its own separate section: that of information theory, focusing on the additional knowledge possessed by the insider (Section 3) and that of the insider as a particular investor (Section 4).\\
Most of our more significant results are gathered in Section 3, where we focus on the additional information enjoyed by the insider. To gauge the advantages it confers, we compare the expected (logarithmic, exponential, power) utilities of both the ordinary agent and the insider agent. We measure the insider's benefit in the form of additional utility and link it to the entropy of the random variable G. These latter results accommodate works by Amendinger \textit{et al.} \cite{Amendinger_Becherer_Schweizer_2003}, \cite{Amendinger_2000}, Imkeller \textit{et al.} \cite{Amendinger_Imkeller_Schweizer_1998} to the incomplete discrete-time setting \cite{Ankirchner_Dereich_Imkeller_2006}. Our findings about utility optimization extend to an insider paradigm that of Delbaen and Schachermayer \cite{Delbaen_Schachermayer} and of Runggaldier \cite{Runggaldier_2006} holding in a classical trinomial model.\\
In Section 4, we delve into the insider's viewpoint to provide two complementary results. Thus, we propose a new interpretation of the \textit{information drift} that governs martingale preservation when shifting from the agent's level of information to the insider's enriched information level. In addition, we provide an explicit expression for the optimal hedging strategies for replicable claims with respect to the set of \textit{optimal martingale measures} identified in Section 3. Both results are derived not only from enlargement filtration theory but also from the Malliavin calculus for marked binomial processes developed in the companion paper \cite{Halconruy_2021_binomial}. Our Ocone-Karatzas-type formula for replicable claims extends Proposition 1.14.4 in Privault \cite{Privault_stochastica} to a discrete incomplete market model, while the results connected to enlargement of filtration illustrate that of Blanchet-Scalliet and Jeanblanc \cite{Blanchet_Jeanblanc_2020} in a simple incomplete market model.
\vspace{3pt}\\

The approach of the paper is original in two respects. First, and to our knowledge, the insider trading problem has thus far not been investigated in a discrete-time setting and in an incomplete market model. The simplest of them, the trinomial market model, is in this perspective an excellent case study: it enables comparison of the results with the continuous case (in particular to the Black-Scholes model to which it converges) or with the complete discrete-time binomial model.\\
Moreover, we give a new and useful representation of the classical trinomial market model, viewed here as a volatility model. To that end, we introduce the \textit{jump-binomial model} by replacing the sequence of i.i.d. random variables in $\{-1,0,1\}$ that underlie the trinomial market model by a discrete-time jump process called the \textit{marked binomial process}.
The advantage of working in this surrogate model is twofold. This is practically advantageous: its volatility structure allows for reasoning, conditioned on jump occurrences, in the binomial model where computations can easily be carried out. Furthermore, this enables us to harness the power of the Malliavin calculus framework for marked binomial processes developed in \cite{Halconruy_2021_binomial} for the first time for exclusively financial purposes. \vspace{3pt}\\

\noindent
The paper is organized as follows. In Section 2, we introduce the necessary instruments, including the jump-binomial model, tools of stochastic analysis for marked binomial point processes developed in \cite{Halconruy_2021_binomial}, and the results on enlargement of filtration from Blanchet-Scalliet and Jeanblanc \cite{Blanchet_Jeanblanc_2020}. The main findings of the paper are discussed in sections 3 and 4. In Section 3, we focus on the benefits provided by the additional information. We compute the expected utilities for both the agent and the insider, and compare them through the computation of the additional expected utility. We link this latter to information theory via the Shannon entropy of the random variable G. In Section 4, adopting the insider's point of view, we compute her hedging strategy. The main results, along with some perspectives, are summarized in the conclusion, Section 5. All proofs are postponed to the Appendix.

\section{Preliminaries and theoretical tools}\label{Section_preliminaries}
\subsection{The jump-binomial model: frame and martingale measures}\label{ternary_model_subsec}
\noindent
Throughout, we denote $\N_0=\N\cup\{0\}$ and we write $\id{n}{m}=\{n,n+1,\dots,m\}$ for any $n,m\in\N_0$ such that $n< m$. For $T\in\N$, let us define  $\Xb:=\id{1}{T}\times\{-1,1\}$ and $\mathcal X:=\sigma\{(t,k),\,t\in\id{1}{T},\,k=\pm1\}$.\\
Let $(\Omega,\cA,\gP)$ be an abstract probability space supposed to be rich enough to contain all random elements that must be defined.

\noindent
\underline{\textbf{The marked binomial process (MBP)}} on
 $\Xb$, denoted by $\eta$, can be constructed and defined as follows:
\begin{enumerate}
\item Consider $T$ independent Bernoulli experiments where a \textit{success} stands for a jump and occurs with probability $\lambda$. The random variable $\Nm_t\sim\mathcal B\mathrm{in}(t,\lambda)$ counts the number of jumps until time $t$.
\item If there is a jump at time $t$, draw a mark $k\in\{-1,1\}$ according to a probability distribution $\mathbf V$ on $\{-1,1\}$ and let $\eta(t,k)=1$ and $\eta(t,\cdot):=\eta(t,1)+\eta(t,-1)=1$. Otherwise, if there is no jump at time $t$, let $\eta(t,1)=\eta(t,-1)=0$ so that $\eta(t,\cdot)=0$.
\end{enumerate}
Then, the random variables $\Delta\Nm_t:=\Nm_{t}-\Nm_{t-1}=\eta(t,1)+\eta(t,-1)$ are independent Bernoulli random variables, and for any $k,\ell\in\{-1,1\}$, $t,s\in\id{1}{T}$ such that $t\neq s$, $\eta(t,k)$ and $\eta(s,\ell)$ are independent. \vspace{3pt}\\
This means that $\eta$ can be identified to the set of elements of $\Xb$ it \textit{lights up} as illustrated below.
\begin{figure}
\caption{\textbf{Realization of a MBP on $\id{1}{6}\times\{-1,1\}$}}
\begin{tikzpicture}%[scale=0.8]
%\draw[step =0.5cm, help lines,dashed] (0,0) grid  (5,5);
%\node(title) at (3.5,5) {\textbf{Realization of a MBP on $\Xb=[7]\times\N$}};
\draw[step=0.5cm,color=black!70] (0.5,0.5) grid (4,2.5);
%\draw[step=0.25cm,color=black!70] (0.5,0.5) grid (4.5,4.5);
\draw[->] (0.5,0.5) -- (0.5,3)node[left,scale=0.9]{$k$} ;
\draw[->] (0.5,1.5) -- (4.5,1.5)node[below,scale=0.9]{$t$} ;
\node(00) at (0.3,1.5){$0$};
\node(10) at (0.2,2){$1$};
\node(01) at (0.2,1){$-1$};
\fill[color=orange] (1.5,2) circle (3pt);
\fill[color=orange] (2,1) circle (3pt);
\fill[color=orange] (3.5,2) circle (3pt);
\fill[color=blue] (3,2) circle (3pt);
\draw[color=orange,fill=white] (1,1.5) circle (3pt);
\draw[color=orange,fill=white] (2.5,1.5) circle (3pt);
\node(eta) at (8.2,2.5){$\textcolor{orange}{\eta}$\,is\,defined\,on\,$(\id{1}{6}\setminus\{5\})\times\{-1,1\}$.};
\node(eta) at (7.2,2){ \underline{For this realization of $\eta$}:};
\node(eta) at (10.3,1.5){$\textcolor{orange}{\eta}(2,1)=\textcolor{orange}{\eta}(3,-1)=\textcolor{orange}{\eta}(6,1)=1$ : $(2,1),(3,-1),(6,1)$ are ON};
\node(noteta) at (8.6,1){$\textcolor{orange}{\eta}(1,\cdot)=\textcolor{orange}{\eta}(4,\cdot)=0$ : $(1,\cdot),(4,\cdot)$ are OFF};
\node(eta) at (8.3,0.5){$\textcolor{orange}{\eta}+\textcolor{blue}{\delta_{(5,1)}}$\,is\,defined\,on\,$\id{1}{6}\times\{-1,1\}$.};
\end{tikzpicture}
\end{figure}
\noindent
\underline{\textbf{The probability space}}
We may (and will) assume that  $\mathcal A=\F_T$ where $\f{}:=(\f{t})_{t\in\id{0}{T}}$ is the canonical filtration defined from $\eta$ by
\begin{equation*}
\f{0}:=\{\emptyset,\Omega\} \quad \text{and} \quad \f{t}:=\sigma\big(\eta(s,k),\,s\leqslant t,k=\pm 1\big)
\vspace{3pt}\\
\end{equation*}
The \textit{intensity} of $\eta$  is the measure $\nu$ on $\Xb$ defined for any $\A\in\mathcal X$, 
\begin{equation}\label{Eq_intensity_def}
\nu(\A)=\sum_{(t,k)\in\A}v(t,k)\delta_{(t,k)} \quad\text{with}\quad v(t,\pm 1):=\lambda \mathbf V(\{\pm 1\})=:\lambda p_{\pm 1}.
\end{equation}
In particular we have
\begin{equation}\label{Eq_lambda_p_def}
\lambda=\gP(\{\eta(1,\cdot)=1\}) \quad\text{and}\quad p:=p_{1}=1-p_{-1}.\\
\end{equation}
\noindent
\underline{\textbf{The \textit{jump-binomial model}}} defined on $(\Omega,\cA,(\F_t)_{t\in\id{0}{T}},\gP)$ embodies a simple financial market modelled by two assets, i.e., a couple of $\R_+$-valued processes $(\A_t,\Sm_t)_{t\in\id{0}{T}}$, defined on the same filtered probability space where
$\id{0}{T}$ is the \textit{trading interval} and  $T\in\N$ is the \textit{maturity}. 
The \textit{riskless asset} $(\A_t)_{t\in\id{0}{T}}$ is deterministic and is defined for some $r\in\R_+$ ($r$ is generally smaller than $1$) and for all $t\in\id{0}{T}$,
\begin{equation}\label{risklessdisc}
\A_t=(1+r)^t.
\end{equation}
The stock price which models the \textit{risky asset}, is the $\F$-adapted process $(\Sm_t)_{t\in\id{0}{T}}$ with (deterministic) initial value $\Sm_0=1$ that satisfies for any $t\in\id{1}{T}$,
\begin{equation}\label{riskternary}
\Delta \Sm_t:=\Sm_t-\Sm_{t-1}:=\theta_t\,\Sm_{t-1},
\end{equation}
where $\theta_t=r\car_{\{\eta(t,\cdot)=0\}}+b\car_{\{\eta(t,1)=1\}}+a\car_{\{\eta(t,-1)=1\}}$ and $a,b$ are real numbers such that $-1<a<0\leqslant r<b$. The sequence of discounted prices $\St:=(\St_t)_{t\in\id{0}{T}}$ is defined by $\St_t=\A_t^{-1}\Sm_t$ ($t\in\id{0}{T}$). 
%\\
Let us remark that \textbf{the $\theta_t$ are independent} as a consequence of the independence of variables  $\eta(t,k)$ and $\eta(s,\ell)$ for $t\neq s$ and $k,\ell\in\{-1,1\}$.
\vspace{3pt}\\
\noindent
The jump-binomial model holds significant practical interest, as emphasized by the two following remarks. As noted in Remark 2.1, it can be interpreted as a discrete stochastic volatility model, while Remark 2.2, reveals a \textit{correspondence} between this model and some trinomial models. It appears then to be a more suitable alternative to our problem and all the results are possible by virtue of this correspondence. 

\begin{remark}
The parameter $\lambda=\gP(\{\eta(1,\cdot)=1\})\in(0,1)$ can be viewed as the \textit{volatility} of the model: The closer $\lambda$ is to $0$, the lower the probability that the stock price process changes between the times $t-1$ and $t$, and the lower the volatility. Conversely, when $\lambda$ is close to $1$, there is a high probability of changes in the stock market process between $t-1$ and $t$. In the extreme case where $\lambda=1$, the surrogate model no longer corresponds to the trinomial model but coincides with the Cox-Ross-Rubinstein (or binomial) model. Let $\gPb$ be the probability measure on $(\Omega,\cA)$ defined by
\begin{equation*}
\gPb(\{\eta(1,\cdot)=1\})=1\quad\text{and}\quad \gPb(\{\eta(1,1)=1\})=p,
\end{equation*}
i.e., under which the probability of an occurrence of a jump at each time is $1$.  
We can remark the process $(\Sm_t/\Sm_{t-1})_{t\in\id{1}{T}}$ behaves under $\gPb$ as a \textit{binomial} or \textit{Rademacher} process. \textbf{In the sequel, we will refer $(\Omega,\cA,(\F_t)_{t\in\id{0}{T}},\gPb,\Sm)$ as the \textit{binomial model}.} 
\\
\begin{center}
\begin{tikzpicture}[scale=0.8]
%
% Trinomial model
%
 \node[] (RA) at (6,0){\textbf{Binomial model} \; $(\Omega,\cA,(\F_t)_{t\in\id{0}{T}},\gPb,\Sm)$};
\node[scale=1] (RA0) at (3,-3.5){$\Sm_{t-1}$};
\node[scale=1] (RA1U) at (8.5,-2){$\Sm_{t}=(1+b)\Sm_{t-1}$};
\draw[->](RA0) -- (RA1U.west) node[sloped,midway,above,scale=0.9]{\textcolor{blue}{$\eta(t,1)=1$},$\;\textcolor{red}{p}$};
%\node[scale=1] (RA1) at (8.5,-3.5){\textcolor{gray}{$\Sm_{t}=(1+r)\Sm_{t-1}$}};
%\draw[->,dash dot,gray] (RA0)  -- (RA1); 
%
\node[scale=1] (RA1D) at (8.5,-5){$\Sm_{t}=(1+a)\Sm_{t-1}$};
\draw[->] (RA0) -- (RA1D.west) node[sloped,midway,below,scale=0.9]{\textcolor{blue}{$\eta(t,-1)=1$},$\;\textcolor{red}{1-p}$};

\end{tikzpicture}
\end{center}
\end{remark}
\noindent
\begin{remark}
The jump-binomial model thus introduced is in fact a surrogate to a classical \textit{trinomial model} (see for instance Runggaldier \cite{Runggaldier_2006}, section 3.2.1) where $m$ would be equal to $1+r$. 
Let us recall - in a different probability space $(\Omega^{\mathrm{tri}},\cA^{\mathrm{tri}},\gP^{^{\mathrm{tri}}})$ - the definition of the classical trinomial model (where here $1+b,1+a,1+r$ stand respectively for the \textit{up}, \textit{down} and \textit{middle} parameters). The price process $(\Sm_t^{\mathrm{tri}})_{t\in\id{0}{T}}$ is characterized by its initial value $\Sm_0^{\mathrm{tri}}$ and satisfies for all $t\in\id{1}{T}$,
\begin{equation*}
\Delta\Sm_t^{\mathrm{tri}}=\Big[b\car_{\{\Xm_t^{\mathrm{tri}}=1\}}+a\car_{\{\Xm_t^{\mathrm{tri}}=-1\}}+r\car_{\{\Xm_t^{\mathrm{tri}}=0\}}\Big]\,\Sm_{t-1}^{\mathrm{tri}},
\end{equation*}
where $(\Xm_t^{\mathrm{tri}})_{t\in\id{1}{T}}$ is an i.i.d. sequence of random variables with values in $\{-1,0,1\}$.
There exists a \textit{correspondence} between the classical trinomial model and our jump-binomial model: the role played by the random variables $\Xm_t^{\mathrm{tri}}$ in the classical trinomial model is held in the jump-binomial model by the i.i.d random variables $\eta(t,1)-\eta(t,-1)$. This correspondence can be informally illustrated through the following figures. \\
\begin{center}
\begin{tikzpicture}[scale=0.8]
%
% Trinomial model
%
 \node[] (RA) at (6,0){\textbf{Trinomial model}};
%\node[scale=1.6] (Law) at (10.5,-3.3){$\overset{\mathcal L}=$};
\node[scale=1] (RA0) at (3,-3.5){$\Sm_{t-1}^{\mathrm{tri}}$};
\node[scale=1] (RA1U) at (8.5,-2){$\Sm_{t}^{\mathrm{tri}}=(1+b)\Sm_{t-1}^{\mathrm{tri}}$};
\draw[->](RA0) -- (RA1U.west) node[sloped,midway,above,scale=0.75]{\textcolor{blue}{$\Xm_t^{\mathrm{tri}}=1$},$\;\textcolor{red}{p_1^{\mathrm{tri}}}$};
\node[scale=1] (RA1) at (8,-3.5){$\Sm_{t}^{\mathrm{tri}}=(1+r)\Sm_{t-1}^{\mathrm{tri}}$}; 
\draw (RA0) -- (RA1) node[sloped,midway,above,scale=0.75]{\textcolor{blue}{$\qquad\Xm_t^{\mathrm{tri}}=0$}};
\draw[->] (RA0) -- (RA1) node[sloped,midway,below,scale=0.75]{\textcolor{red}{$\qquad1-p_1^{\mathrm{tri}}-p_{-1}^{\mathrm{tri}}$}};
\node[scale=1] (RA1D) at (8.5,-5){$\Sm_{t}^{\mathrm{tri}}=(1+a)\Sm_{t-1}^{\mathrm{tri}}$};
\draw[->] (RA0) -- (RA1D.west) node[sloped,midway,below,scale=0.75]{\textcolor{blue}{$\Xm_t^{\mathrm{tri}}=-1$},$\;\textcolor{red}{p_{-1}^{\mathrm{tri}}}$};
\node[] (Xt) at (4.8,-7) {$\Xm_t^{\mathrm{tri}}\in\{-1,0,1\}$};
%
%Ternary model
%
\node[] (SA) at (15,0){\textbf{Jump-binomial model} $(\Omega,\cA,(\F_t)_{t\in\id{0}{T}},\gP,\Sm)$};
\node[scale=1] (SA0) at (12,-3.5){$\Sm_{t-1}$};
\node[] (SAs) at (15,-3.5){$\times$};
\node[scale=1] (SA1U) at (15.2,-1){$\Sm_{t}=(1+b)\Sm_{t-1}$};
\draw[->] (15,-3.5)  -- (15,-1.5); 
\node[scale=1] (SA1) at (20,-3.5){$\Sm_{t}=(1+r)\Sm_{t-1}$};
\draw[->] (SA0)  -- (SA1); 
\node[scale=1] (SA1D) at (15.2,-6){$\Sm_{t}=(1+a)\Sm_{t-1}$};
\draw (15,-3.1) --  (15,-1.5) node[sloped,midway,above,scale=0.7]{\textcolor{blue}{$\eta(t,1)=1$}};
\draw (15,-3.1) --  (15,-1.5) node[sloped,midway,below,scale=0.75]{\textcolor{red}{$p$}};
\draw (15,-5.35) --  (15,-3.9) node[sloped,midway,above,scale=0.7]{\textcolor{blue}{$\eta(t,-1)=1$}};
\draw (15,-5.5) --  (15,-3.9) node[sloped,midway,below,scale=0.75]{\textcolor{red}{$1-p$}};
\draw[->] (15,-3.1)  -- (15,-5.5); 
\draw (13.7,-3.5) --  (17.5,-3.5) node[sloped,very near end,color=white,above,scale=0.75]{\textcolor{blue}{$\eta(t,\cdot)=0$}};
\draw (13.7,-3.5) --  (17.5,-3.5) node[sloped,very near end,color=white,below,scale=0.75]{\textcolor{red}{$1-\lambda$}};
\node[] (Xt) at (14.8,-7) {$\eta(t,1)-\eta(t,-1)\in\{-1,0,1\}$};
%\\
\end{tikzpicture}
\end{center}
Let us consider a trinomial model defined by the initial value of the asset $\Sm_0^{\mathrm{tri}}$, and $(p_{1}^{\mathrm{tri}},p_{-1}^{\mathrm{tri}})\in(0,1)^2$ such that $p_{1}^{\mathrm{tri}}+p_{-1}^{\mathrm{tri}}<1$.
By setting $\Sm_0=\Sm_0^{\mathrm{tri}}$, $\lambda=p_{-1}^{\mathrm{tri}}+p_{1}^{\mathrm{tri}}>0$ and  $p=p_{1}^{\mathrm{tri}}/\lambda$, we get for all $s\in\R_+^*$,
\begin{equation*}
\mathbf E\Big[s^{\Sm_t/\Sm_{t-1}}\Big]=\esp{s^{1+\theta _t}}
=s^{1+r}(1-\lambda)+s^{1+b}\,\lambda p+s^{1+a}\,\lambda(1-p)=\mathbf E_{\gP^{\mathrm{tri}}}\Big[s^{\Sm_t^{\mathrm{tri}}/\Sm_{t-1}^{\mathrm{tri}}}\Big].
\end{equation*}
All the results in expectation will \textit{de facto} remain valid in the trinomial market model thanks to this identity. 
\end{remark}
\noindent
\underline{\textbf{Martingale measures}} To compute the optimal expected utility in Section 3 via a dual approach, we need to determine the sets of martingale measures in the binomial/jump-binomial models, i.e., the probability measures equivalent to the historical probability measures $\gPb/\gP$ under which $(\St_t)_{t\in\id{0}{T}}$ is a $\F$-martingale. \\
\underline{\textit{In the binomial model}}, $\sP^{\Bi,\F}$ is the set of the $\F$-martingale measures equivalent to $\gPb$. The binomial model stands for  a complete market whose unique risk-neutral probability measure, denoted by $\wPb$, is defined on $(\Omega,\cA)$ by
\begin{equation}\label{risk-neutral_measure_bi_eq}
\wPb(\{\eta(t,\cdot)=1\})=1\quad\text{and}\quad \wPb(\{\eta(t,1)=1\})=(r-a)/(b-a)=:\widehat p.
\end{equation}
Then
\begin{equation*}
\sP^{\Bi,\F}=\{\wPb\}.
\end{equation*}
Under $\wPb$  ("$\mathfrak b$" for \textit{binomial}), the process $\St/\St_{\cdot-1}$ is a binomial/Rademacher process (up to a linear transform).  Define the measures $\wPb_t$ such that $\wPb=:\underset{t\in\id{1}{T}}\bigotimes\;\wPb_t$.
 \vspace{3pt}\\
\underline{\textit{In the jump-binomial model}}, $\sP^{\F}$ is the set $\F$-martingale measures equivalent to $\gP$.
 By virtue of its correspondence with some trinomial market model (see Remark 2.2), we can determine $\sP^{\F}$ by translating the results of Runggaldier \cite{Runggaldier_2006} into our frame. Let us introduce $\gPc$ the measure on $\cA$ such that for all $t\in\id{1}{T}$,
\begin{equation*}
\gPc(\{\eta(t,\cdot)=1\})=0 \quad \text{and}\quad \gP^{\mathfrak c}(\{\eta(t,\pm 1)=1\})=0,
\end{equation*}
the measures $\gPc_t$ such that 
\begin{equation*}
 \gPc=:\underset{t\in\id{1}{T}}\otimes\;\gPc_t.
\end{equation*}
 Note that under $\gPc$ ("$\mathfrak c$" for \textit{constant}),  the process $\St/\St_{\cdot-1}$ is deterministic constant (there is no jump a.s.).\\
In the same vein of  \cite{Runggaldier_2006}, we can prove that $\mathscr P^{\F}$ is the convex hull
\begin{equation*}
\mathscr P^{\F}=\mathrm{Conv}\big\{\gP^j,\;j\in \{1,\dots,2^T\}\big\}
\end{equation*}
whose $2^T$ vertices $\gP^j$ ($j\in \{1,\dots,2^T\}$) are extremal measures such that
\begin{equation}\label{Extremal_measures_eq}
\gP^j=\underset{t\in\id{1}{T}}\bigotimes\;(\wPb_t)^{\gamma_t^j}\,(\gPc_t)^{1-\gamma_t^j},
\end{equation} 
with $\gamma_t^j\in\{0,1\}$ for all $(t,j)\in\id{1}{T}\times\{1,\dots,2^T\}$.
We can note that for all $j\in\{1,\dots,2^T\}$, the $\gP^j$ are not equivalent to $\gP$, since $\gP^j$ coincides on $\sigma(\eta(t,k),k\in\{-1,1\})$ with $\gP_t^{\mathfrak c}$ or $\wPb_t$ which are not. However, any convex combination of the $\gP^j$ is equivalent to $\gP$. 
Let us introduce the probability measure $\wP$ such that
\begin{equation}\label{Def_wP_eq}
\wP=(1-\lambda)\gPc+\lambda\wPb\in\sP^\F,
\end{equation}
where, as a reminder, $\lambda=\gP(\{\eta(1,\cdot)=1\})$.\\
\noindent
As $\lambda$ approaches $0$ (or $1$), the process $\St/\St_{\cdot-1}$ behaves more like a deterministic constant (or binomial) process under $\wP$. This relationship between $\wP$ and the unique risk-neutral measure of the binomial model $\wPb$ will be of crucial importance in solving the utility optimization problems for both the agent and the insider.

\subsection{Clark formula for marked binomial processes}
We recall here the Clark formula for marked binomial processes (see \cite{Halconruy_2021_binomial}), which is given in the case we are interested in here, i.e., when the mark space is reduced to $\{-1,1\}$.  Let us consider $\eta$ a marked binomial process defined on $\Xb$.
\vspace{3pt}\\
\noindent
\underline{\textbf{Functionals}} We denote by $\mathcal L^0(\Omega)$ the class of real-valued measurable functions $\Fm$ on $(\Omega,\cA)$.  For any $\Fm\in\mathcal L^0(\Omega)$, there exists a $\gP$-a.s. unique real-valued measurable function $\fk$ such that $\Fm=\mathfrak f(\eta)$. 
\vspace{5pt}\\
\noindent
\underline{\textbf{The families $\widehat{\mathcal Z}$ and $\widehat{\mathcal R}$}} Let us introduce $\widehat{\mathcal Z}:=\{\D\widehat\Zm_{(t,\pm 1)},\,t\in\id{1}{T}\}$ and $\widehat{\mathcal R}:=\{\D\widehat\Rm_{(t,\pm 1)},\,t\in\id{1}{T}\}$ respectively defined for all $t\in\id{1}{T}$ by
\begin{equation}\label{Def_Z_eq}
\D\widehat\Zm_{(t,1)}=\car_{\{\eta(t,1)=1\}}-\lambda \widehat p\quad \text{and}\quad \D\widehat\Zm_{(t,-1)}=\car_{\{\eta(t,-1)=1\}}-\lambda (1-\widehat p),
\end{equation}
as well as
\begin{equation}\label{Def_R_eq}
 \D\widehat\Rm_{(t,1)}=\D\widehat\Zm_{(t,1)}\quad\text{and}\quad \D\widehat\Rm_{(t,-1)}=\D\widehat\Zm_{(t,-1)}-\rho\Delta\widehat\Rm_{(t,1)}\quad \text{with}\; \rho:=-\frac{\lambda(1-\widehat p)}{1-\lambda \widehat p}.\vspace{5pt}\\
\end{equation}
We can note that the random variables of $\widehat{\mathcal Z}$ and $\widehat{\mathcal R}$ are centred, i.e., for all $t\in\id{1}{T},k\in\{-1,1\}$,
\begin{equation*}
\gE[\D\widehat\Zm_{(t,k)}]=\gE[\D\widehat\Rm_{(t,k)}]=0,
\end{equation*}
and the random variables of $\widehat{\mathcal R}$ are orthogonal with respect to the \textit{mark}, i.e., for all $t\in\id{1}{T},k,\ell \in\{-1,1\}$,
\begin{equation*}
\gE[\D\widehat\Rm_{(t,k)}\D\widehat\Rm_{(t,\ell)}]=\car_{\{k=\ell\}}\gE[\D\widehat\Rm_{(t,k)}^2]
\end{equation*}

\noindent
\underline{\textbf{Malliavin derivative}}
As a reminiscence of the Malliavin operator on the Poisson space, the \textit{add-one cost operator} or \textit{Malliavin's derivative} $\Dm$ is defined  for any $\Fm\in\mathcal L^0(\Omega)$, $t\in\id{1}{T}$ by
\begin{equation}\label{Def_D+_eq}
\Dm_{(t,\pm 1)}\Fm:=\mathfrak f(\pi_{t}(\eta)+\delta_{(t,\pm 1)} )-\mathfrak f(\pi_{t}(\eta)),
\end{equation}
where the map $\pi_t$ is defined for any marked binomial process $\eta$ by
\begin{equation}
\pi_t(\eta)=\displaystyle\sum_{s\neq t}\big[\eta(s,1)+\eta(s,-1)\big].
\end{equation}
For any $t\in\id{1}{T}$, $\Dm_{(t,\pm 1)}\Fm$ measures the effect on $\Fm$ of enforcing the lighting of a point $\pm 1$ at time $t$.\vspace{5pt}\\
\noindent
\underline{\textbf{Clark formula}}
By rewriting Proposition 4.4 of \cite{Halconruy_2021_binomial} into our frame, we get the analogue of the Clark formula: for any $\Fm\in\mathcal L^0(\Omega)$,
\begin{equation}
\Fm=\gE[\Fm]+\sum_{t\in\id{1}{T}}\Big(\gE\big[\Dm_{(t,1)}\Fm\,|\,\F_{t-1}\big]\D\widehat\Rm_{(t,1)}+\gE\big[\Dm_{(t,-1)}\Fm\,|\,\F_{t-1}\big]\D\widehat\Rm_{(t,-1)}\Big).
\end{equation}
\noindent
As a corollary, if $(\Lm_t)_{t\in\id{0}{T}}$ is a $(\gP,\F)$-martingale, for any $(s,t)\in\id{0}{T}^2$ such that $s<t$,
\begin{equation}\label{Clark_formula_mart_eq}
\Lm_t=\Lm_s+\sum_{r=s+1}^t\Big(\gE\big[\Dm_{(r,1)}\Fm\,|\,\F_{r-1}\big]\D\widehat\Rm_{(r,1)}+\gE\big[\Dm_{(r,-1)}\Fm\,|\,\F_{r-1}\big]\D\widehat\Rm_{(r,-1)}\Big).
\end{equation}
\subsection{Enlargement of filtration in a discrete setting: existing results}\label{Subsec_existing_results}
\noindent
The first agent, known as the \textit{ordinary agent}, makes investment decisions based on the publicly available information. On the other hand, the second agent, referred to as the \textit{insider}, possesses additional information right from the start. To distinguish between their information sets, we introduce two separate filtrations: the ordinary agent's information level corresponds to the initial filtration $\F$ (i.e., her knowledge at time $t\in\id{0}{T}$ is given by $\F_{t}$)
whereas the insider disposes at any time $t\in\id{0}{T}$ an information given by the $\sigma$-algebra $\G_{t}$ defined via the initial enlargement 
\begin{equation*}
\G_t=\F_{t}\vee \sigma(\Gm),
\end{equation*}
where $\Gm$ is an $\F_T$-measurable random variable that encodes the information overload enjoyed by the insider.  The random variable $\Gm$ is assumed to fulfill: 
\begin{assumption}\label{Ass_G1} $\Gm$ takes its values in a \textit{finite} 
set $\Gamma$ endowed by a $\sigma$-algebra $\mathscr G$.
\end{assumption}
\noindent
As $\Gm$ takes a finite number of values, even it means removing the values of $\cm$ such that $\mathbf P(\{\Gm=\cm\})=0$, we can consider, that for any for all $\cm\in\Gamma$, $\mathbf P(\{\Gm=\cm\})>0$.
\vspace{3pt}\\
\noindent
In the continuous case, \textit{Jacod's condition} indicates that if the conditional laws of $\Gm$ are absolutely continuous with respect to its law,  then semimartingales are preserved when switching from $\F$ to $\G$. In a discrete setting, Blanchet-Scalliet and Jeanblanc \cite{Blanchet_Jeanblanc_2020} highlight that no such assumption is required and any $(\gP,\F)$-martingale is a $(\gP,\G)$-semimartingale: note that Jacod's hypothesis holds when $\Gm$ takes only discrete values. We recall here important results of Blanchet-Scalliet and Jeanblanc's study (translated into our frame) we will refer as \textbf{\textcolor{teal}{Facts}}  in the sequel. \vspace{2pt}\\\
\noindent
\begin{fact}\textcolor{teal}{(Conditional density process)} (See \cite{Blanchet_Jeanblanc_2020}, Proposition 2.3 (a))\label{Fact_1} 
Under \textbf{Assumption \ref{Ass_G1}}, for any $t<T$ and $\gP$-almost surely for all $\cm\in\Gamma$, we have $\mathbf P(\{\Gm=\cm\}|\F_t)>0$.
\end{fact}
\vspace{3pt}\\
Under \textbf{Assumption \ref{Ass_G1}}, and since $\Gamma=\Gm(\Omega)$ is finite, any set $\Cm\in\mathscr G$ is of the form $\Cm=\bigcup_{\cm\in \Cm}\{\Gm=\cm\}$  and for any $t\in\id{0}{T}$, 
\begin{equation*}
\gP(\{\Gm\in \Cm\}\,|\,\F_{t})=\sum_{\cm\in \Cm}\gP(\{\Gm=\cm\}\,|\,\F_{t})=\sum_{\cm\in \Cm}\frac{\gP(\{\Gm=\cm\}\,|\,\F_{t})}{\gP(\{\Gm=\cm\})}\gP(\{\Gm=\cm\})=:\esp{\q^{\Gm}_t\car_\Cm},
\end{equation*}
where $\q^{\Gm}_t$ is defined by letting for any $\cm\in\Gamma$, $\q_0^{\cm}=1$ and for any $t\in\id{1}{T}$,
\begin{equation}\label{Eq_wq_def}
\q^{\cm}_t=\frac{\gP(\{\Gm=\cm\}\,|\,\F_{t})}{\gP(\{\Gm=\cm\})}.
\end{equation}
\vspace{2pt}\\
Let $(\widehat\Lm_t)_{t\in\id{0}{T}}$ be the density process of $\wP$ (defined by \eqref{Def_wP_eq}) with respect to $\gP$, i.e., such that $\widehat\Lm_0=1$ and $\widehat\Lm_t=(d\wP/d\gP)|\F_t$ for $t\in\id{1}{T}$. Then, for all $t\in\id{0}{T}$, $\widehat\Lm_t$ is not null almost surely and we can define
\begin{equation}\label{Eq_wqG_def}
\widehat\q_t^{\Gm}=\mathbf q_t^{\Gm}/\widehat\Lm_t.
\end{equation}
\\
\noindent
\begin{fact} \label{Fact_2} \textcolor{teal}{(Preservation of semi-martingales)}
(See \cite{Blanchet_Jeanblanc_2020}, Proposition 2.3 (b)) \\
Under \textbf{Assumption \ref{Ass_G1}}, for a given $(\wP,\F)$-martingale $\Xm$, the process $(\Xm_t^{\G})_{t\in\id{1}{T-1}}$ defined by $\Xm_0^{\G}=\Xm_0$, and for any $t\in\id{1}{T-1}$ by
\begin{equation}\label{drift}
\Xm_t^{\G}=\Xm_t-\sum_{s=1}^t\frac{\textless \Xm,\widehat\q^{\cm}\textgreater_s^{\wP}\big|_{\cm=\Gm}}{\widehat\q_{s-1}^{\Gm}}=:\Xm_t-\mu_t^{\G,\Xm},
\end{equation}
is a $(\wP,\G)$-martingale.
\end{fact}
\vspace{2pt}\\
\noindent
\textcolor{white}{space}\\
\noindent
\begin{fact}\textcolor{teal}{($\G$-martingale measures $\Q$ and $\wQ$)}\label{Fact_3a}
(See \cite{Blanchet_Jeanblanc_2020}, Lemma 2.7)  Under \textbf{Assumption \ref{Ass_G1}}, $1/\q^{\Gm}$ is a positive $(\gP,\G)$-martingale on $\id{1}{T-1}$ with expectation $1$.\vspace{3pt}\\
\end{fact}
Then, $1/ \q^{\Gm}$ (and \textit{a fortiori} $1/\widehat \q^{\Gm}$) is  positive, so that we can define $\Q$ and $\wQ$ the probability measures on $(\Omega,\G_{T-1})$ such that for any $\A_t\in\G_t$,
\begin{equation}\label{Eq_gQ_wQ_q_def}
\Q(\A_t)=\mathbf E\big[(1/\q_t^{\Gm})\car_{\A_t}\big] \;\;\text{and}\;\; \wQ(\A_t)=\mathbf E\big[(\widehat\Lm_{T-1}/\q_t^{\Gm})\car_{\A_t}\big]=\mathbf E_{\widehat\gP}\big[(\q_t^{\Gm})^{-1}\car_{\A_t}\big]. %\vspace{2pt}\\
\end{equation}
\textcolor{white}{space}\\
\noindent
\begin{fact}\textcolor{teal}{(Independence of $\F_t$ and $\G$ under $\wQ$)}\label{Fact_3}
(See \cite{Blanchet_Jeanblanc_2020}, Lemma 2.7) Under \textbf{Assumption \ref{Ass_G1}}, the following statements hold:
\begin{itemize}
\item[(i)] For any $t\in\id{1}{T-1}$, $\F_{t}$ and $\sigma(\Gm)$ are independent under $\wQ$,
\item[(ii)] For any $t\in\id{1}{T-1}$, $\wQ|_{\F_{t}}=\wP|_{\F_{t}}$ and $\wQ|_{\sigma(\Gm)}=\gP|_{\sigma(\Gm)}$.
\end{itemize}
\end{fact}
\textcolor{white}{space}\\
\noindent
\begin{fact}\label{Fact_4} \textcolor{teal}{(Conservation of martingales)}
(See \cite{Blanchet_Jeanblanc_2020}, Proposition 2.6) Under \textbf{Assumption \ref{Ass_G1}}, for any $t\in\id{0}{T-1}$, any $(\wP,\F)$-martingale is a $(\wQ,\G)$-martingale on $\id{0}{t}$.
\end{fact}
\vspace{10pt}\\
We get similar results in $(\Omega,\F,(\G_t)_{t\in\id{1}{T-1}},\gPb)$ as explained in the following part. \vspace{3pt}\\
\underline{\textbf{The processes $\q^{\Bi,\Gm}$ and $\widehat\q^{\Bi,\Gm}$}}\\
For any $t\in\id{1}{T-1}$, 
Jacod's condition holds in $(\Omega,\F,(\G_t)_{t\in\id{1}{T-1}},\gPb)$. To see it, let $(\mathrm M_t)_{t\in\id{0}{T}}$ be the $(\gPb,\F)$-martingale such that $\mathrm M_t=\gPb(\{\Gm=\cm\}|\F_t)$. For any $\cm\in\Gamma$ such that $\gPb(\{\Gm=\cm\})=0$, we have
\begin{equation*}
\mathrm M_T=\gPb(\{\Gm=\cm\}|\F_T)=\mathbf 1_{\{\Gm=\cm\}}=0,\quad\gPb\text{-a.s.},
\end{equation*}
so that we obtain, for $t\in\id{1}{T-1}$, 
\begin{equation*}
\gPb(\{\Gm=\cm\}|\F_t)=\mathrm M_t=\gE[\mathrm M_T|\F_t]=0.
\end{equation*}
The conditional laws of $\Gm$ are then absolutely continuous with respect to its law (under $\gPb$) so that we can define
$\wQb$, the probability measure defined on $(\Omega,\G_{T-1})$  such that for any $t\in\id{0}{T-1}$,
\begin{equation*}
\wQb(\A_t)=\mathbf E\big[(\widehat\Lm^{\Bi}_{T-1}/\q_t^{\Bi,\Gm})\car_{\A_t}\big]=\mathbf E_{\widehat\gP^{\Bi}}\big[(\q_t^{\Bi,\Gm})^{-1}\car_{\A_t}\big]\; ; \; \A_t\in\G_t,
\end{equation*}
where  $\widehat\Lm_t^\Bi=(d\wPb/d\gPb)|\F_t$ and the random variable $\q^{\Bi,\cm}_t $ is defined for any $\omega\in\Omega,\cm\in\Gamma$  by
\begin{equation}\label{proc_PG_Bi}
 \q_t^{\Bi,\cm}(\omega)=\frac{\gPb(\{\Gm=\cm\}\,|\F_{t})(\omega)}{\gPb(\{\Gm=\cm\})}.
\end{equation}
Let also $(1/\widehat\q^{\Bi,\Gm})$ be the $\G$-adapted process such that for $t\in\id{1}{T-1}$, $1/\widehat\q_t^{\Bi,\Gm}:=\widehat\Lm_t^{\Bi}/\q_t^{\Gm}$. We can state the analogues of \textcolor{teal}{\textbf{Fact 4}} and \textcolor{teal}{\textbf{Fact 5}} in $(\Omega,\cA,(\G_t)_{t\in\id{1}{T-1}},\gPb)$ by replacing everywhere needed $\gP$, $\wP$, $\q^{\Gm}$, $\widehat\q^{\Gm}$ respectively by $\gPb$, $\wPb$, $\q^{\Bi,\Gm}$, $\widehat\q^{\Bi,\Gm}$.
\section{Insider vs agent: the rewards of extra information}\label{Section_utility}
For the sake of simplicity, \textbf{we assume in this section that $r=0$} and we work directly with discounted prices. \\
In this section, we compute and compare the maximum expected utility of both the ordinary agent and the insider, in order to quantify the latter's edge and measure the benefit of the additional information at her hands. 
\subsection{Utility maximization problems: setting and notation}\label{Subsec_utility_notation}

\subsubsection*{Portfolios and strategies}
We consider an economic agent and an insider both disposing of $x\in\R_+^*$ euros at date $t=0$ (initial budget constraint), for whom we want to determine the maximal expected logarithmic, exponential and power utilities (defined below) from terminal wealth. Let $\HH$ be some filtration on $(\Omega,\cA)$, that may and shall be replaced by $\F$ or $\G$ later on. As a reminder, the value of a $\HH$-portfolio at time $t\in\id{0}{T}$ is given by the random variable
\begin{equation*}
\Vm_t(\psi)=\alpha_{t}\,+\vp_{t}\,\Sm_t,
\end{equation*}
where the so-called $\HH$-\textit{strategy} $\psi=(\alpha_t,\vp_t)_{t\in \id{0}{T}}$ with initial value $(\alpha_0,\varphi_0)$ is a couple of $\HH$-predictable processes modelling respectively the amounts of riskless and risky assets held in the portfolio. Without loss of generality, we may and shall assume that $\varphi_0=0$. A $\HH$-strategy $\psi=(\alpha,\varphi)$  is said to be \textit{self-financing} if it fulfils the condition:  
\begin{equation}\label{Self_financing_cond}
(\alpha_{t+1}-\alpha_{t})+\Sm_t\,(\vp_{t+1}-\vp_{t})=0,
\end{equation}
for any $t\in \id{1}{T-1}$.  
A nonnegative $\HH_{T}$-measurable random variable  $\Fm$ (called \textit{claim}) is \textit{replicable} or \textit{reachable} if there exists an $\HH$-predictable self-financing strategy $\psi=(\alpha,\varphi)$ which corresponding portfolio value satisfies $
\alpha_0=\Vm_0(\psi)>0$ and  $\Vm_{T}(\psi)=\Fm$.  Let $\mathscr S_{\HH}(x)$ be the class of \textit{$\HH$-admissible strategies} of initial value $x$, i.e.,
\begin{equation}\label{admissible_strat_eq}
\sS_{\HH}(x)=\{\psi=(\alpha,\varphi)\,|\,\alpha_0=x,\,\varphi\;\text{is}\,\HH\text{-predictable,\,\,}\psi \text{\,is\,self-financing\,and\,}\Vm_t(\psi)> 0,\,\forall t\in\id{0}{T} \}.
\end{equation}
\subsubsection*{Utility maximization problems}
Let $x\in\R_+^*$. In this section, we are led to consider the optimization problems from the agent's point of view at any time $t\in\id{1}{T}$,
\begin{equation}\label{optTerAg_eq}
\Phi_t^{\F,u}(x)=\underset{\psi\,\in\sS_{\F}(x)}\sup\esp{u(\Vm_{t}(\psi))},
\end{equation} 
and from the  insider's at any time $t\in\id{1}{T-1}$,
\begin{equation}\label{optTerIns_eq}
\Phi_t^{\G,u}(x)=\underset{\psi\,\in\sS_{\G}(x)}\sup\esp{u(\Vm_{t}(\psi))},
\end{equation} 
where $u$ is a \textit{utility function}, strictly increasing and strictly concave on $\R$ or $\R_+^*$. Throughout, we could consider utility functions $u$ that can be \textit{logarithmic}, \textit{exponential} or a \textit{power function}. For each one, we designate its conjugate function by $v^{u}$:
\begin{itemize}
\item \textbf{Logarithmic utility} (as $\mathbf{log}$) $u:x\in\R_+^*\mapsto\log(x)$, $v^{\mathbf{log}}:y\in\R_+^*\mapsto-\log(y)-1$.
\item \textbf{Exponential utility} (as $\mathbf{exp}$) $u:x\in\R\mapsto -\exp(-x)$, $v^{\mathbf{exp}}:y\in\R_+^*\mapsto y(\log(y)-1)$.
\item \textbf{Power utility} (as $\mathbf{pow}$) $u:x\in\R_+^*\mapsto x^{\alpha}/\alpha$ (with $\alpha\in(0,1)$), $v^{\mathbf{pow}}:y\in\R_+^*\mapsto -(1/\beta)y^{\beta}$ with $\beta=\alpha/(\alpha-1)$.
\end{itemize}
\subsubsection*{Dual optimization problems} In the sequel, we solve \eqref{optTerAg_eq} and \eqref{optTerIns_eq} by a dual approach that can be found in Delbaen and Schachermayer (\cite{Delbaen_Schachermayer}, section 3). For $t\in\id{1}{T}$, this boils down for the agent
\begin{equation}\label{dual_Ag_eq}
\Psi_t^{\F,u}(y)=\underset{\M\,\in\sP^{\F}}\inf\gE_{\M}\Big[v^{u}\Big(y\frac{d\mathbf M}{d\gP}\Big|_{\F_t}\Big)\Big],
\end{equation}
where $\sP^{\F}$ is the set of $\F$-martingale measures equivalent to $\gP$. Note that solving \eqref{optTerIns_eq} (for the insider) via a dual approach means optimizing with respect to $\sP^{\G}$, namely the set of $\G$-martingale measures equivalent to $\gP$ on $\id{1}{T-1}$. In order to explain our approach, let us recall the (martingale) probability measures (PM) we handle in the binomial and jump-binomial models, for the agent and the insider.
\begin{center}
\captionof{table}{(Martingale) Probability Measures (PM)}\label{Tab_chaotic_dec}
\begin{tabular}{|c|l|l|l|}
\hline
\textbf{Model}& \hspace{30pt}Binomial model  & \hspace{50pt}Jump-binomial model\\
$\backslash$ &\hspace{20pt}$(\Omega,\cA,(\F_t)_{t\in\id{0}{T}},\gPb,\Sm)$&\hspace{50pt}$(\Omega,\cA,(\F_t)_{t\in\id{0}{T}},\gP,\Sm)$\\
\textbf{Investor} & & \\
\hline
\hline
&\underline{Historical PM} $\gPb$& \underline{Historical PM} $\gP$\\
& $\gPb(\eta(1,\cdot)=1)=1$& $\gP(\eta(1,\cdot)=1)=\lambda$  \\
& $\gPb(\eta(1,1)=1)=p$& $\gP(\eta(1,1)=1)=\lambda p$  \\
\textbf{Agent} & &\\
&\underline{Martingale PM set} $\sP^{\Bi,\F}=\{\wPb\}$&  \underline{Martingale PM set} $\sP^{\F}=\mathrm{Conv}\{\gP^j,\,j\in\id{1}{2^T}\}$\\
& $\wPb(\eta(1,\cdot)=1)=1$&  \\
& $\wPb(\eta(1,1)=1)=\widehat p$& e.g. $\wP=(1-\lambda)\gP^{\mathfrak c}+\lambda \wPb\in\sP^{\F}$  \\
&&\\
\hline
&\underline{Special martingale PM} $\wQb$&  \underline{Special martingale PM} $\wQ$\\
&$(d\wQb/d\gPb)|\G_t=\widehat\Lm_t^\Bi/\q_t^{\mathfrak b,\Gm}$& $(d\wQ/d\gP)|\G_t=\widehat\Lm_t/\q_t^{\Gm}$\\
\textbf{Insider} &&\\
&\underline{Martingale PM set} $\sP^{\Bi,\G}$&  \underline{Martingale PM set} $\sP^{\G}$\\
&determined in subsection \ref{Subsec_set_PBG} & difficult to determine\\
&&\\
\hline
\end{tabular}
\end{center}
Solving the dual problems \eqref{optTerAg_eq} and \eqref{optTerIns_eq} directly for the agent and the insider involves dealing with sets of probability measures that can be quite large (e.g., $\sP^{\F}$ for the agent) or very challenging to describe (e.g., $\sP^{\G}$ for the insider). However, we overcome this difficulty by leveraging the \textit{volatility} structure of the jump binomial model. This allows us to simplify the dual problems to the binomial model, where the martingale measure set $\sP^{\Bi}$ (for the agent) reduces to $\sP^{\Bi,\F}$, while $\sP^{\Bi,\G}$ (for the insider) can be described as shown in subsection \ref{Subsec_set_PBG}.
\color{black}
\subsection{Agent's maximum expected utility}\label{Subsec_Insider_max}
The analogue of the maximization problem \eqref{optTerAg_eq} can be elegantly solved in the trinomial model viewed as an embryonic \textit{volatility} model. This idea is in line with observations in Runggaldier \textit{et al.} \cite{Runggaldier_Trivellato_Vargiolu_2002}, in Vargiolu (\cite{Vargiolu_2002}, remark 3) or in Delbaen and Schachermayer (\cite{Delbaen_Schachermayer}, section 3.3). An \textit{underlying volatility structure} clearly appears in the construction of our jump-binomial model itself. To illustrate this, let's consider the simple one-period case.
\subsubsection*{Toy example $T=1$}
A basic computation leads to 
\begin{align*}
\gE\big[u(\Vm_{T}(\psi))\big]
&=(1-\lambda)\gE\big[u(x+\varphi_T\D\Sm_T)\,|\,\eta(T,\cdot)=0\big]+\lambda\gE\big[u(x+\varphi_T\D\Sm_T)\,|\,\eta(T,\cdot)=1\big]\\
&=(1-\lambda)u(x)+\lambda\mathbf E_{\gPb}\big[u(\Vm_{T}(\psi))\big].
\end{align*}
Then, the optimal strategy for $\Phi_T^{\F,u}$ is the same as the unique optimizing strategy solution of the optimization problem $\Phi_T^{\mathfrak b,\F,u}(x)$ defined by
\begin{equation}\label{opt_Bi_eq}
\Phi_T^{\mathfrak b,\F,u}(x):=\underset{\psi\,\in\sS_{\F}(x)}\sup\mathbf E_{\gPb}\big[u(\Vm_{T}(\psi))\big].
\end{equation}
Note that $\Phi_T^{\F,\Bi,u}(x)$  (for $T=1$) can be solved by considering its dual problem
\begin{equation}\label{dual_optTerAg_eq}
\Psi_T^{\Bi,u}(y)=\underset{\M\,\in\sP^{\Bi,\F}}\inf\gE_{\M}\Big[v^{u}\Big(y\frac{d\mathbf M}{d\gP}\Big)\Big]=\gE_{\wPb}\Big[v^{u}\Big(y\frac{d\wPb}{d\gP}\Big)\Big]=:\gE_{\wPb}\Big[\widehat{\Vm}_{T}^{\Bi,\F,u}\Big],
\end{equation}
since the set $\sP^{\Bi,\F}$ of the $\F$-martingale measures equivalent to $\gPb$ is the reduced to $\{\wPb\}$.\vspace{3pt}\\
We can then deduce the procedure:
\subsubsection*{Agent's utility optimization procedure}
\begin{enumerate}
\item Solve the $u$-utility optimization problem $\Phi_T^{\F,\Bi,u}(x)$ for $T=1$ by considering its dual problem $\Psi_T^{\Bi,u}(y)$.
This provides the optimal (discounted) portfolio value $\widehat{\Vm}_{T}^{\Bi,\F,u}$ in terms of $\gPb$ and $\wPb$.
\item Deduce the optimal discounted portfolio value $\widehat{\Vm}_{T}^{\F,u}$ for the jump-binomial model with one period by replacing $\gPb$ and $\wPb$ respectively by $\gP$ and $\wP$, where, as a reminder, $\wP$ is the element of $\sP^\F$ defined by \eqref{Def_wP_eq}, i.e., $\wP=(1-\lambda)\gPc+\lambda\wPb$.
\item Extend the results at any time $t\in\id{1}{T}$. Since the increments of the stock price process are i.i.d., this can be achieved through the usual dynamic programming method (i.e., a backward induction process).
\end{enumerate}

By following the procedure described above, we translate some results from Delbaen and Schachermayer (\cite{Delbaen_Schachermayer}, section 3) or Pascucci and Runggaldier (\cite{Pascucci_Runggaldier_2012}, section 2.4) into the jump-binomial model. We retrieve the formulas stated in the binomial model for logarithmic, exponential, and power utility functions, as well as in the trinomial model for power utility in the one-period case (see \cite{Delbaen_Schachermayer}). Additionally, we obtain new formulas for exponential and power utilities in the multi-period case. \vspace{3pt}\\
\noindent
We need the following definition: Given two probability measures defined on the same measurable space $(\Omega,\HH)$ where $\cB$ is a $\sigma$-algebra, $\mathfrak D_{\cB}(\gP||\Q)$ designates the \textit{Kullback-Leibler divergence} or \textit{relative entropy} of $\gP$ with respect to $\Q$ on $\cB$ and is defined by
\begin{displaymath} 
\mathfrak D_{\HH}(\gP||\Q)=\left\{
\begin{array}{lll}
&\esp{\log\bigg(\dfrac{d\gP}{d\Q}\bigg|_{\cB}\bigg)} & \quad \text{if} \quad \gP\ll\Q \; \text{on} \; \cB\vspace{2pt}\\
& +\infty &\quad \text{otherwise}.
\end{array}\right.\\
\end{displaymath}
\noindent
Note that by definition of $\wP$ \eqref{Def_wP_eq} we have 
\begin{equation}\label{Kullback-Leibler_rem_eq}
\mathfrak D_{\F}(\gPb||\wPb)=\mathfrak D_{\F}(\gP||\wP).
\end{equation}

\begin{proposition}[Agent's portfolio optimization]\label{Opt_port_ag_ter_T_prop}
For $x\in\R_+^*$, $t\in\id{1}{T}$ and $u\in\{\mathbf{log},\mathbf{exp},\mathbf{pow}\}$,  let $\widehat{\Vm}_{t}^{\F,u}$ be the optimal portfolio (discounted) value for the problem $\Phi_t^{\F,u}(x)$ defined by \eqref{optTerAg_eq}. We get:
\vspace{5pt}\\
\noindent
\underline{\textbf{Logarithmic utility}}: $$\widehat{\Vm}_{t}^{\F,\mathbf{log}}=x\cdot\dfrac{d\gP}{d\gPs}\bigg|_{\F_t}.$$ 
%\vspace{3pt}\\
%
\noindent
\underline{\textbf{Exponential utility}}:
$$\widehat{\Vm}_{t}^{\F,\mathbf{exp}}=x+\mathfrak D_{\F_t}(\gPs||\gP)+\log\Big(\dfrac{d\gP}{d\gPs}\Big|_{\F_t}\Big).$$
\vspace{3pt}\\
\noindent 
\underline{\textbf{Power utility}}: Let $\widehat \Lm_t=(d\wP/d\gP)|\F_t$ and $\beta=\alpha/(\alpha-1).$ $$\widehat{\Vm}_{t}^{\F,\mathbf{pow}}=x\cdot\gE\big[\widehat\Lm_t^{\beta}\big]^{-1}\cdot\bigg(\dfrac{d\gPs}{d\gP}\Big|_{\F_t}\bigg)^{\beta-1}.$$
\end{proposition}
\noindent
We can check that $\widehat{\Vm}_{x,0}^{\F,u}=x$ for all $u\in\{\mathbf{log},\mathbf{exp},\mathbf{pow}\}$.

\subsection{Insider's maximum expected utility}\label{Agent_ins_T_subsec}
In this subsection, we address the utility optimization problem \eqref{optTerIns_eq} for the insider. As mentioned earlier, since similar arguments apply in the context of the insider, the optimal strategy for the insider in the jump-binomial model is the same as the one obtained in the binomial model.
Thus we can derive $\Phi_t^{\G,u}(x)$ ($t\in\id{1}{T-1}$) from the problem
\begin{equation}\label{opt_Bi_ins_eq}
\Phi_t^{\Bi,\G,u}(x):=\underset{\psi\,\in\sS_{\G}(x)}\sup\mathbf E_{\gPb}\big[u(\Vm_{t}(\psi))\big].
\end{equation}
However, contrary to agent's paradigm, the set of $\G$-martingale measures equivalent to $\gPb$ on $\id{1}{T-1}$ is not reduced to a single element. To solve \eqref{opt_Bi_ins_eq} we are led to consider its dual problem $\Psi_t^{\Bi,\G,u}(y)$ defined by
\begin{equation}\label{dual_Ins_eq}
\Psi_t^{\G,u}(y)=\underset{\M\,\in\sP^{\G}}\inf\gE_{\M}\Big[v^{u}\Big(y\frac{d\mathbf M}{d\gP}\Big|_{\G_t}\Big)\Big],
\end{equation}
where $\mathscr P^{\mathfrak b,\G}$ is the set of $\G$-martingale measures equivalent to $\gPb$ on $\id{1}{T-1}$. We need to determine it, which is the purpose of the following subsection.
\subsubsection*{Martingale measures for the insider: the set $\sP^{\Bi,\G}$}\label{Subsec_set_PBG}
To describe the set $\sP^{\Bi,\G}$, we use an argument of Grorud and Pontier \cite{Grorud_Pontier_1999} provided the market is \textit{complete} for the insider in the following sense: any $\G_{T-1}$-measurable bounded contingent claim $\Fm$ can be hedged by a strategy in $\sS^{\G}$.
To prove it, we show that $\Sm$ satisfies a $\G$-predictable representation property in $(\Omega,\cA,(\G_t)_{t\in\id{1}{T-1}},\wPb)$. \vspace{3pt}\\
We begin with this handy technical lemma that we will also use in subsection \ref{Subsec_insider_hedging}.
\begin{lemma}\label{Lem_DetltaS}
For any $t\in\id{1}{T-1}$, $k\in\{-1,1\}$,
\begin{equation}\label{Eq_wQ_wP_equal_as}
\wQ(\{\eta(t,k)=1\}|\G_{t-1})=\lambda\widehat p, \quad\gP\mathrm{-a.s.}
\end{equation}
As a consequence, for any $t\in\id{1}{T-1}$, 
\begin{equation}\label{Eq_DetltaS}
\frac{\D\St_t}{\St_{t-1}}=\frac{b-r}{1+r}\Delta\widehat\Zm_{(t,1)}+\frac{a-r}{1+r}\Delta\widehat\Zm_{(t,-1)}.
\end{equation}
\end{lemma}
\noindent
Note that we can state an analogue property for $\wQb$: 
For any $t\in\id{1}{T-1}$, $k\in\{-1,1\}$,
\begin{equation}\label{Eq_wQb_wPb_equal_as}
\wQb(\{\eta(t,k)=1\}|\G_{t-1})=\widehat p, \quad\gPb\mathrm{-a.s.}
\end{equation}
\underline{\textbf{Predictable representation property}} As a reminder, $\widehat \gP^\Bi$ is the risk-neutral probability measure in $(\Omega,\cA,\F,\gP^{\Bi},\Sm)$ and is defined by \eqref{risk-neutral_measure_bi_eq}.
For any $t\in\id{1}{T-1}$, let us first define $\D\overline\Zm_t^{\Bi}$ by
\begin{equation}\label{Def_Z_Bi_norm_eq}
\D\overline\Zm_t^{\Bi}=[\lambda\widehat p(1-\lambda\widehat p)]^{-1/2}\big(\mathbf 1_{\{\eta(t,\cdot)=1\}}-\lambda).
\end{equation}
As a consequence of \eqref{Eq_wQb_wPb_equal_as}, we can check that for all $t\in\id{1}{T-1}$,
\begin{equation*}
\gE_{\wQb}[\D\overline\Zm_t^{\Bi}|\G_{t-1}]=0\quad\text{and}\quad\gE_{\wQb}[(\D\overline\Zm_t^{\Bi})^2|\G_{t-1}]=1.
\end{equation*}
Then the family $\{\D\overline\Zm_t^{\Bi},\,t\in\id{1}{T-1}\}$ stands for the analogue of the (Rademacher) \textit{structure equation solution} (see Privault \cite{Privault_stochastica}, section 1.4) in $(\Omega,\cA,(\G_t)_{t\in\id{1}{T-1}},\wQb)$.
Moreover, it drives the dynamics of $\Sm$: For all $t\in\id{1}{T-1}$, 
\begin{align*}
\D\Sm_t
&=\Sm_{t-1}\Big[(1+b)\car_{\{\eta(t,1)=1\}}+(1+a)\car_{\{\eta(t,-1)=1\}}-1\Big]\\
&=\Sm_{t-1}\Big[(b-a)\big(\car_{\{\eta(t,1)=1\}}-\widehat p\big)+\widehat p(b-a)+a\Big]=\Sm_{t-1}(b-a)\D\widehat\Zm_t^{\Bi},
\end{align*}
since, by \textcolor{teal}{\textbf{Fact 4}} (subsection 2.2), $\Sm$ is a $(\wQb,\G)$-martingale on $\id{1}{T-1}$ and then $\widehat p(b-a)+a-r=0$. Then, it follows from Privault (\cite{Privault_stochastica}, Proposition 1.7.5) that for any $\G_t$-measurable random variable $\Fm$ there exists a $\G$-predictable process $\psi$ such that 
\begin{equation}\label{predictable_binomial_eq}
\Fm=\gE_{\wQb}[\Fm|\G_0]+\sum_{s=1}^t\psi_s\D\overline\Zm_s^{\Bi}=\gE_{\wQb}[\Fm|\G_0]+\sum_{s=1}^t\frac{\psi_s[\lambda(1-\lambda)]^{1/2}}{\Sm_{s-1}(b-a)}\D\Sm_s.
\end{equation}
Checking that the process $\varphi:=[\psi[\lambda(1-\lambda)]^{1/2}]/[{(b-a)\Sm_{\cdot-1}}]$ is $\G$-predictable, we deduce that $\Sm$ has the predictable representation property in $(\Omega,\cA,(\G_t)_{t\in\id{1}{T-1}},\wQb)$, i.e., that any $\G_t$-measurable random variable $\Fm$ can be represented as
\begin{equation*}
\Fm=\gE_{\wQb}[\Fm|\G_0]+\sum_{s=1}^t\varphi_s\D\Sm_s,
\end{equation*}  
where $\varphi=(\varphi_s)_{s\in\id{0}{t}}$ is a $\G$-predictable process. Then, the binomial model market is \textit{complete} for the insider.\vspace{3pt}\\
\underline{\textbf{The set $\sP^{\Bi,\G}$}}
can be then obtained using a result from Grorud and Pontier \cite{Grorud_Pontier_1999}: as the market is \textit{complete} for the insider, the set $\sP^{\Bi,\G}$  writes
\begin{equation}\label{Set_P_Bi-G_eq}
\sP^{\Bi,\G}=\big\{\Um*\wQb,\, \Um\,\in\mathscr{U}^{\Bi,\G} \big\},
\end{equation}
where $\mathscr{U}^{\Bi,\G}$ is the set of $\sigma(\Gm)$-measurable ($\G_0=\sigma(\Gm)$) positive random variables $\Um$  such that $\gE_{\wQb}[\Um]=1$. We use the notation $*$ to indicate that $\Um$ is the Radon-Nikodym derivative of the probability measure $\bM:=\Um*\wQb$ with respect to $\wQb$.

\subsubsection*{Insider's utility optimization in the (jump-)binomial model}
As for the agent (subsection 2.2), we will first solve the associated dual problem in the binomial model \eqref{dual_Ins_eq}. We adapt Theorem 3.2.1 in \cite{Delbaen_Schachermayer} into our frame.
In the same vein, we define for $u\in\{\mathbf{log},\mathbf{exp},\mathbf{pow}\}$,
\begin{equation}\label{U_opt_ins_def_eq}
\Um^{u}:=\underset{\Um\in\mathscr U^{\mathfrak b,\G}}{\mathrm{argmin}}\,\left\{\gE\Big[v\Big(y^{u}\frac{d[\Um*\widehat{\mathbf Q}^{\mathfrak b}]}{d\mathbf P^{\mathfrak b}}\Big|_{\G_{T-1}}\Big)\Big]+xy^{u}\right\}
\end{equation}
where $y^{\mathbf{log}}=1/x$, $y^{\mathbf{exp}}=\exp(-x-\mathfrak D_{\G_{T-1}}(\wQb||\gPb))$, $y^{\mathbf{pow}}=x^{1/(\beta-1)}\gE\big[(d\wQb/d\gPb)^{\beta}\big]^{-1}$.
\vspace{3pt}\\

Let us introduce $\sP^{\mathrm{opt},\G}$, which we refer to as \textit{the insider's optimal measure set:}
\begin{equation*}\label{Eq_insider_opt_measure_set}
\sP^{\mathrm{opt},\G}=\big\{(1-\lambda)\gPc+\lambda(\Um*\wQb),\, \Um\,\in\mathscr{U}^{\Bi,\G} \big\}.
\end{equation*}
We can now state our first main result: the explicit solution of the insider utility maximization problem in the jump-binomial model. 
\begin{theorem}[Insider's utility optimization in the jump-binomial model]\label{Opt_port_ins_ter_T_prop}
For $x\in\R_+^*$, $t\in\id{1}{T-1}$ and $u\in\{\mathbf{log},\mathbf{exp},\mathbf{pow}\}$, let $\widehat{\Vm}_{t}^{\G,u}$ be the optimal portfolio (discounted) value for the problem $\Phi_t^{\G,u}(x)$ defined by \eqref{optTerIns_eq}. Let us
define $\widehat\Q^{u}$ the probability measure equivalent to $\gP$ such that 
\begin{equation*}
\widehat\Q^{u}=(1-\lambda)\gPc+\lambda(\Um^{u}*\wQb)\in\sP^{\mathrm{opt},\G},
\end{equation*}
where $\Um^{u}$ is defined by \eqref{U_opt_ins_def_eq}. We get:\vspace{3pt}\\
\noindent
\underline{\textbf{Logarithmic utility}}: 
\begin{equation}\label{Eq_opt_insider_log}
\widehat{\Vm}_{t}^{\G,\mathbf{log}}=x\cdot\dfrac{d\gP}{d\wQ^{\mathbf{log}}}\bigg|_{\G_t}.
\end{equation}
%
%\vspace{3pt}\\
%
\noindent
\underline{\textbf{Exponential utility}}: 
\begin{equation*}
\widehat{\Vm}_{t}^{\G,\mathbf{exp}}=x+\mathfrak D_{\G_t}(\widehat\Q^{\mathbf{exp}}||\gP)+\log\Big(\dfrac{d\gP}{d\widehat\Q^{\mathbf{exp}}}\Big|_{\G_t}\Big).
\end{equation*}
\noindent 
\underline{\textbf{Power utility}}: Let $\beta=\alpha/(\alpha-1)$.
\begin{equation*}
\widehat{\Vm}_{t}^{\G,\mathbf{pow}}=x\cdot\gE\Big[\Big(\dfrac{d\widehat\Q^{\mathbf{pow}}}{d\gP}\Big|_{\G_t}\Big)^{\beta}\Big]^{-1}\cdot\bigg(\dfrac{d\widehat\Q^{\mathbf{pow}}}{d\gP}\Big|_{\G_t}\bigg)^{\beta-1}.
\end{equation*}
\end{theorem}
\subsection{Insider's advantage and impact of the extra information}
\noindent
The insider's additional expected $u$-utility for $u\in\{\mathbf{log},\mathbf{exp},\mathbf{pow}\}$ and up to time $t\in \id{1}{T-1}$ is defined by
\begin{equation*}
\mathcal U_t^{u}(x)=\underset{\psi\,\in\mathscr S_{\G}(x)}\sup\esp{u(\Vm_{t}(\psi))}-
\underset{\psi\,\in\mathscr S_{\F}(x)}\sup\esp{u(\Vm_{t}(\psi))}.
\end{equation*} 
\noindent
Let us define for any $t\in\id{1}{T-1}$, 
$\mathrm{Ent}(\Gm)$ and $\mathrm{Ent}(\Gm\,|\,\HH_t)$ by 
\begin{equation*}
\mathrm{Ent}(\Gm)=-\sum_{\cm\in\Gamma}\log\big(\gP(\{\Gm=\cm\})\big)\,\gP(\{\Gm=\cm\}),
\end{equation*}
and
\begin{equation*}
\mathrm{Ent}(\Gm\,|\,\HH_t)=-\esp{\sum_{\cm\in\Gamma}\log\big(\gP(\{\Gm=\cm\}\,|\,\HH_t)\big)\,\gP(\{\Gm=\cm\}\,|\,\HH_t)},
\end{equation*}
that respectively stand for the \textit{entropy} of the random variable $\Gm$ and its \textit{conditional entropy} with respect to the filtration $\HH$.\\ 
To our knowledge, the computation of the insider's additional expected utility has been limited to the logarithmic case and continuous-time complete market models in \cite{Amendinger_Imkeller_Schweizer_1998}. However, our results for exponential and power utilities are new. This presents our second main result, which is also the most significant one in this section.
\begin{theorem}\label{additional_utility_th}
Assume that the ordinary agent and the insider have an initial budget $x\in\R_+^*$.
For $u\in\{\mathbf{log},\mathbf{exp},\mathbf{pow}\}$, the insider's additional expected $u$-utility up to time $t\in \id{1}{T-1}$ is given by:
\\
\noindent
\underline{\textbf{Logarithmic utility}}: 
\begin{equation}\label{additional_utility_eq}
\mathcal U_t^{\mathbf{log}}(x)=\lambda\mathfrak D_{\G_t}(\widehat\gP||\widehat\Q)=\mathrm{Ent}(\Gm)-\mathrm{Ent}(\Gm\,|\,\F_t)-\gE_{\gPb}[\log(\Um^{\mathbf{log}})].
\end{equation}
\noindent 
\underline{\textbf{Exponential utility}}: 
\begin{equation*}
\mathcal U_t^{\mathbf{exp}}(x)=-\exp\big(-x\mathfrak D_{\G_t}(\widehat\Q||\gP)\big)+\exp\big(-x\mathfrak D_{\F_t}(\widehat\gP||\gP)\big).
\end{equation*}
\noindent 
\underline{\textbf{Power utility}}: 
\begin{equation*}
\mathcal U_t^{\mathbf{pow}}(x)
=\frac{\lambda x^{\alpha}}{\alpha}\bigg(\gE\bigg[\Big(\frac{d\widehat\Q^{\pow}}{d\gP}\Big|_{\G_t}\Big)^{\beta}\bigg]^{1-\alpha}-\gE\Big[\big(\widehat\Lm_t\big)^{\beta}\Big]^{1-\alpha}\bigg).
\end{equation*}
\end{theorem}

\noindent
Since  for all $u\in\{\mathbf{log},\mathbf{exp},\mathbf{pow}\}$ $\Um^{u}$ is a $\sigma(\Gm)$-measurable random variable such that $\mathbf E_{\wQb}[\Um^{u}]=1$, we can check that $\widehat{\Vm}_{0}^{\G,u}=x$.
\noindent
\begin{remark}
We obtain the discrete counterpart of Theorem 4.1 in Amendinger, Imkeller, and Schweizer \cite{Amendinger_Imkeller_Schweizer_1998}, which holds for the Black-Scholes model and a discrete random variable $\Gm$. Our result expresses the additional expected logarithmic utility of the insider in terms of the relative entropy of $\Gm$.
Furthermore, our findings can be compared to Theorem 5.12 in Ankirchner \textit{et al.} \cite{Ankirchner_Dereich_Imkeller_2006}, where they establish that, under an initial enlargement (continuous) setting, the insider's additional utility is related to the \textit{relative difference} of the enlarged filtration with respect to the initial one. In fact, this also coincides with the Shannon entropy between (with the corresponding notations) $\Gm$ and some random variable $\mathrm{Id}_{\F_T}$.
However, in the continuous case, the result still holds at the deadline $T$ by taking the limit as $t$ approaches $T$. In our discrete framework, this is not the case due to the existence of an arbitrage opportunity at the horizon $T$, causing the insider's utility gain to become infinite at that time.
% \vspace{3pt}\\
\end{remark}

Similar to the logarithmic case, we can express the additional expected exponential and power utilities in terms of the (conditional) entropy of $\Gm$. Essentially, the insider's advantage can be quantified by the entropy of the random variable G, reflecting the information she has from prior knowledge of G's outcome. The greater the difference between entropy and conditional entropy of $\Gm$, the larger the insider's additional utility. The following estimates are obtained:
\begin{corollary}\label{additional_utility_cor} Under the same assumptions as Theorem \ref{additional_utility_th}, for $t\in\id{1}{T-1}$, we have the following bounds:\\
\noindent 
\underline{\textbf{Exponential utility}}: There exists $\kappa\in(0,1)$ and a probability measure $\mathbf{M}_\kappa=(1-\kappa)\widehat\Q+\kappa\widehat\gP$ such that
\begin{equation*}
\mathcal U_t^{\mathbf{exp}}(x)\leqslant \exp\big(-x\mathfrak D_{\G_t}(\mathbf{M}_\kappa||\gP)\big)\big[\mathrm{Ent}(\Gm)-\mathrm{Ent}(\Gm\,|\,\F_t)-\gE_{\gPb}[\log(\Um^{\mathbf{log}})]\big].\\
\end{equation*}
\noindent 
\underline{\textbf{Power utility}}: There exists $\kappa\in(0,1)$ and a random variable satisfying $\log(\widehat\Lm_t^{\mathbf{pow},\kappa}):=(1-\kappa)\log(\widehat\Lm_t)+\kappa\log(\widehat\Lm_t^{\mathbf{pow}})$ such that
\begin{equation*}
\mathcal U_t^{\mathbf{pow}}(x)\leqslant\frac{|\beta|^{1-\alpha}x^{\alpha}}{\alpha}\Big\|\big(\widehat\Lm_t^{\mathbf{pow},\kappa}\big)^{\beta}\Big\|_{\infty}^{1-\alpha}\big[\mathrm{Ent}(\Gm)-\mathrm{Ent}(\Gm\,|\,\F_t)-\gE_{\gPb}[\log(\Um^{\mathbf{pow}})]\big]^{1-\alpha}.
\end{equation*}
\end{corollary}

\section{Inside insider's mind}

In this section, we use the Malliavin calculus for marked binomial processes to state to establish two results of interest from the insider's perspective: a new interpretation of the information drift and the computation of the insider's \textit{optimal} hedging strategy.

\subsection{A new interpretation of the \textit{information drift}
}
By \textcolor{teal}{\textbf{Fact 2}}, martingales with respect to the initial filtration become semimartingales by moving to the enlarged one. This transfer is encoded by a particular process $\mu^{\G}$, called the \textit{information drift}, i.e., the drift to eliminate so that the price dynamics remains a martingale from insider's point of view (see definition in \cite{Ankirchner_Dereich_Imkeller_2006}). Note that by \textcolor{teal}{\textbf{Fact 2}}, $\St-\mu^{\G}$ is a $(\wP,\G)$-martingale on $\id{1}{T-1}$, and that $\mu_t^{\G}$ is obtained by replacing $\Xm$ in \eqref{drift} by $\St$, i.e.,
\begin{equation}\label{drift_Y}
\mu_t^{\G}:=\sum_{s=1}^t\frac{\textless \St_t,\widehat\q^{\cm}\textgreater_s^{\wP}\big|_{\cm=\Gm}}{\widehat\q_{s-1}^{\Gm}}.
\end{equation}
\noindent
\underline{\textbf{Information drift and Malliavin derivative}}
In line with Imkeller's approach \cite{Imkeller_2003}, we can relate the information drift to the random variable $\Gm$ using the Malliavin derivative $\Dm$ and provide an alternative interpretation of Blanchet-Scalliet and Jeanblanc's result \cite{Blanchet_Jeanblanc_2020}, Proposition 2.3. This is our third significant result.
\begin{proposition}\label{Drift_Malliavin_Th}
The information drift $\mu^{\G}$ defined by \eqref{drift} can be written for any $t\in\id{1}{T-1}$ as
\begin{equation}\label{def_muG_eq}
\mu_t^{\G}=\sum_{\ell\in\{-1,1\}}\frac{a_{\ell}\; \mathbf E_{\wP}\big[\Dm_{(t,\ell)} \,\widehat\q_t^{\cm}|\f{t-1}\big]\big|_{\cm=\Gm}}{\widehat\q_{t-1}^{\Gm}},
%=:\sum_{k\in\{-1,1\}}k\widehat\mu_{(t,k)}^{\G},
\end{equation}
where for any $\ell\in\{-1,1\}$, $ a_{\ell}=\sum_{k\in\{-1,1\}}c_k\gE_{\wP}[\D \widehat\Zm_{(1,k)}\D \widehat\Rm_{(1,\ell)}]$, i.e.,
\begin{equation}\label{Drift_atkl_def_eq}
a_{1}=\frac{\lambda \widehat p(1-\lambda \widehat p)(b-r)}{1+r}+\frac{\lambda^2\widehat p(1-\widehat p)(a-r)}{1+r} \;\;\text{and}\;\;a_{-1}=\frac{\lambda\,(1-\lambda)(1-\widehat p)(1+2\lambda^2\widehat{p})(a-r)}{(1-\lambda\widehat p)(1+r)}.
\end{equation} 
\end{proposition}
\begin{remark}
This result is the discrete analogue of the formula (17) in Imkeller \cite{Imkeller_2003}. Classical Malliavin's derivative (in the Wiener space) enjoys the chain rule, so that the formula exhibited by Imkeller elegantly reduces in the continuous case (with the corresponding notations) to $\mu_t^{\G}=\nabla_t\log(p(\cdot,\cm))|_{\cm=\Gm}$.
\end{remark}
\noindent
\underline{\textbf{An example}} $\Gm=\mathbf 1_{\{\Sm_T\in[c,d]\}}$. For the sake of simplicity, assume here that $(1+a)(1+b)=1$ and take $\Sm_0=1$.
Let $c,d$ real positive numbers such that $(1+b)^{-T}\leqslant c<d \leqslant (1+b)^T$. Consider the case $\Gm=\mathbf 1_{\{\Sm_T\in[c,d]\}}$, i.e., the insider knows whether the terminal price of the asset is between $c$ and $d$.
\begin{itemize}
\item For any $t\in\id{1}{T}$, let $\chi_t^{\pm}=\sum_{s=1}^t\car_{\{\eta(s,\pm 1)=1\}}$, i.e., $\chi_t^{+}$ (resp. $\chi_t^{-}$) is the $\F_t$-measurable random variable that indicates the number of jumps with mark $1$ (resp. $-1$) until $t$.
\item For any $t\in\id{1}{T}$, $y$, let $\mathfrak n_{t,y}^{+}=\max\{n\in\id{1}{t}\,:\, (1+b)^{n}(1+r)^{t-n}\leqslant y\}$ and $\mathfrak n_{t,y}^{-}=\max\{n\in\id{1}{t}\,:\,(1+b)^{-n}(1+r)^{t-n}\geqslant y\}$
\item For any $t\in\id{1}{T}$, let us define $F$ defined from the cumulative function of $\Sm_t$ by
\begin{align}\label{Eq_def_CDF}
F(t,x,y)
\nonumber
&:=\gP(\{x\leqslant \Sm_t \leqslant y\})\\
&=\sum_{k=0}^{\mathfrak n_{t,y}^+}\sum_{\ell=0}^{\mathfrak n_{t,x}^{-}\wedge (t-k)}\binom{\mathfrak n_{t,y}^+}{k}\binom{\mathfrak n_{t,x}^{-}\wedge (t-k)}{\ell}(\lambda p)^{k}(\lambda(1-p))^{\ell}(1-\lambda)^{t-k-\ell}
\end{align}
\end{itemize}
For any $t\in\id{1}{T}$, we have
\begin{equation*}
\Sm_t=(1+b)^{\chi_t^{+}-\chi_t^{-}}(1+r)^{t-(\chi_t^{+}+\chi_t^{-})}.
\end{equation*}
\begin{proposition}\label{Prop_drift_example}
In the case where $\Gm=\mathbf 1_{\{\Sm_T\in[c,d]\}}$, the drift of information writes 
\begin{equation}\label{def_muG_eq_2}
\mu_t^{\G}=\sum_{\ell\in\{-1,1\}}\frac{a_{\ell}\; \mathbf E_{\wP}\big[\Dm_{(t,\ell)} \,\widehat\q_t^{\cm}|\f{t-1}\big]\big|_{\cm=\Gm}}{\widehat\q_{t-1}^{\Gm}},
\end{equation}
where the $ a_{\ell}$ are given by \eqref{Drift_atkl_def_eq}, and we have
\begin{equation*}
\mathbf E_{\wP}\big[\Dm_{(t,\ell)} \,\widehat\q_t^{\cm}|\f{t-1}\big]\big|_{\cm=\Gm}= \mathbf E_{\wP}\big[\Dm_{(t,\ell)} \,\widehat\q_t^{1}|\f{t-1}\big]\car_{\{\Gm=1\}}++ \mathbf E_{\wP}\big[\Dm_{(t,\ell)} \,\widehat\q_t^{-1}|\f{t-1}\big]\car_{\{\Gm=0\}}
\end{equation*}
with 
\begin{multline}\label{Eq_dirft_ex1}
\mathbf E_{\wP}\big[\Dm_{(t,1)} \,\widehat\q_t^{1}|\f{t-1}\big]
=\frac{p}{\widehat p}\frac{F\big(T-(t-1),c/[(1+b)\Sm_{t-1}],d/[(1+b)\Sm_{t-1}]\big)}{\,\widehat\Lm_{t-1}}\\-\frac{F\big(T-(t-1),c/[(1+r)\Sm_{t-1}],d/[(1+r)\Sm_{t-1}]\big)}{\widehat\Lm_{t-1}},
\end{multline}
and,
\begin{multline}\label{Eq_dirft_ex2}
\mathbf E_{\wP}\big[\Dm_{(t,-1)} \,\widehat\q_t^{1}|\f{t-1}\big]=\frac{1-p}{1-\widehat p}\frac{F\big(T-(t-1),c/[(1+a)\Sm_{t-1}],d/[(1+a)\Sm_{t-1}]\big)}{\,\widehat\Lm_{t-1}}\\
-\frac{F\big(T-(t-1),c/[(1+r)\Sm_{t-1}],d/[(1+r)\Sm_{t-1}]\big)}{\widehat\Lm_{t-1}}.
\end{multline}
The quantities $\mathbf E_{\wP}\big[\Dm_{(t,\pm1)} \,\widehat\q_t^{-1}|\f{t-1}\big]$ are obtained by replacing in \eqref{Eq_dirft_ex1} and \eqref{Eq_dirft_ex2} $F$ by $1-F$. 
\end{proposition}
\noindent
The numerators in \eqref{Eq_dirft_ex1} and  \eqref{Eq_dirft_ex2} can be viewed as the "partial derivatives" of $F$ with respect to a jump at time $t$ and directions $\pm 1$.

\subsection{Insider's optimal hedging strategy}\label{Subsec_insider_hedging}
For the sake of simplicity, \textbf{we assume again in this subsection that $r=0$} and we work directly with discounted prices. \\

In this subsection, we address the question: can we find what we refer to as an \textit{optimal hedging strategy} replicating some claim $\Fm$? In other words, given a $\G_{T-1}$-measurable bounded contingent claim $\Fm$, we are seeking a strategy $\psi\in\sS_{\HH}(x)$ that satisfies $\Vm_{T-1}(\psi)=\Fm$ and is independent of the choice of the \textit{insider's optimal probability measure} in $\sP^{\mathrm{opt},\G}$ (defined by \eqref{Eq_insider_opt_measure_set}).\\ 
Our result is based on the application of Clark's formula in $(\Omega,\cA,(\G_t)_{t\in\id{1}{T-1}},\wQ)$. This writes, for any $\G_{T-1}$-measurable random variable $\Fm$,
\begin{equation}\label{Clark_wQformula_eq}
\Fm=\gE_{\wQ}[\Fm|\G_0]+\sum_{t\in\id{1}{T-1}}\sum_{k\in\{-1,1\}}\gE_{\wQ}\big[\Dm_{(t,k)}\Fm\,|\,\G_{t-1}\big]\D\widehat\Rm_{(t,k)}.
\end{equation}
We can then deduce our fourth and last significant result, in fact the most important one of this section.
\begin{theorem}[Optimal hedging formula]\label{Hedging_agent_th}
Every $\G_{T-1}$-measurable claim $\Fm$ is reachable for the insider. 
The $\G$-strategy $\psi=(\alpha_t,\varphi_t)_{t\in\id{1}{T-1}}$
defined on the one hand by $\varphi_0=0$,
\begin{equation*}\label{Hedging_agent_eq}
\vp_t=(1+r)^{t-T+1}\frac{\gE_{\wQ}\big[\Dm_{(t,-1)}\Fm\,|\,\g{t-1}\big]}{(a-r)\Sm_{t-1}},
\end{equation*}
and, on the other one, by $\alpha_0=(1+r)^{-T+1}\gE_{\wQ}[\Fm|\G_0]=(1+r)^{-T+1}\gE[\Fm]$ and for any $t\in\id{1}{T-1}$,
\begin{equation*}
\alpha_t=\alpha_{t-1}-\frac{(\varphi_t-\varphi_{t-1})\Sm_{t-1}}{\A_{t-1}},
\end{equation*}
is a $\G$-predictable self-financing strategy that replicates $\Fm$. \\
Moreover, the strategy is independent of the choice of the \textit{optimal} probability measure $\wQ\in\sP^{\mathrm{opt},\G}$ for $\Um\in\mathscr U^{\mathfrak b,\G}$.
\end{theorem}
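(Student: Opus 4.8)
The plan is to read off the risky-asset holding $\vp$ from a predictable representation of the discounted claim under $\wQ$ in the enlarged filtration, and then to recover the riskless holding $\alpha$ from the self-financing constraint. Since a self-financing $\psi$ has discounted value $\Vt_t(\psi)=\alpha_0+\sum_{s=1}^{t}\vp_s\,\D\St_s$, replicating $\Fm$ at time $T-1$ amounts to writing the discounted claim $\overline{\Fm}:=\Fm/\A_{T-1}$ as a $\G_0$-measurable constant plus a $\D\St$-integral. I would therefore first establish, as in \eqref{predictable_binomial_eq} for the binomial model, that $\St$ has the predictable representation property in $(\Omega,\cA,(\G_t)_{t\in\Tc},\wQ)$: by Proposition \ref{Martingale_measure_prop2} the process $\St$ is a $(\wQ,\G)$-martingale on $\Tc$, and by Proposition \ref{Martingale_measure_prop1} the identity \eqref{Def_R_Z_ins_eq} holds, so the orthogonalized family $\{\D\widehat\Rm_{(t,k)}\}$ is $\G$-adapted with vanishing $\G_{t-1}$-conditional mean. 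Granting this, every $\G_{T-1}$-measurable claim is reachable, which is the first assertion.

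To pin down $\vp$, I would compute the dynamics of the discounted price. From \eqref{riskternary} and $\St_t=\A_t^{-1}\Sm_t$ one gets
\[
\D\St_t=\St_{t-1}(1+r_t)^{-1}\big[(b_t-r_t)\car_{\{(t,1)\in\eta\}}+(a_t-r_t)\car_{\{(t,-1)\in\eta\}}\big].
\]
Subtracting the $\G_{t-1}$-conditional means through \eqref{Def_R_Z_ins_eq} and the martingale property expresses $\D\St_t$ as a combination of $\D\widehat\Zm_{(t,1)}$ and $\D\widehat\Zm_{(t,-1)}$, and inverting the Gram--Schmidt relation defining $\widehat\Rm$ shows that the coefficient of $\D\widehat\Rm_{(t,-1)}$ in $\D\St_t$ is $\St_{t-1}(1+r_t)^{-1}(a_t-r_t)$. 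On the other hand, the Clark formula \eqref{Clark_wQformula_eq} applied to $\overline{\Fm}$ gives its $\D\widehat\Rm_{(t,-1)}$-coefficient as $\A_{T-1}^{-1}\gE_{\wQ}[\Dm_{(t,-1)}\Fm\,|\,\G_{t-1}]$. Matching the two expansions of $\overline{\Fm}$ coefficient by coefficient (legitimate by uniqueness of the chaotic expansion) and using $\Sm_{t-1}=R_{t-1}\St_{t-1}$ together with $R_t=(1+r_t)R_{t-1}$ yields exactly the stated $\vp_t$, and $\vp_0=0$.

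I would then define $\alpha$ by the self-financing recursion \eqref{Self_financing_cond}, i.e. $\alpha_t=\alpha_{t-1}-(\vp_t-\vp_{t-1})\Sm_{t-1}/\A_{t-1}$, fixing $\alpha_0=\A_{T-1}^{-1}\gE_{\wQ}[\Fm\,|\,\G_0]$ so that the initial value matches $\gE_{\wQ}[\overline{\Fm}\,|\,\G_0]$. Predictability follows by induction: $\vp_t$ is $\G_{t-1}$-measurable since it is built from $\gE_{\wQ}[\,\cdot\,|\,\G_{t-1}]$ and $\F_{t-1}$-measurable data, and the recursion propagates $\G_{t-1}$-measurability to each $\alpha_t$; self-financing holds by construction. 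Replication then follows because $\Vt_{T-1}(\psi)=\alpha_0+\sum_{s=1}^{T-1}\vp_s\,\D\St_s$ reproduces the Clark expansion of $\overline{\Fm}$ term by term, whence $\Vm_{T-1}(\psi)=\A_{T-1}\Vt_{T-1}(\psi)=\Fm$. The main obstacle is precisely the predictable representation property of the single risky asset $\St$: because the one-step law under $\wQ$ charges three states, one must check that the $\D\widehat\Rm_{(t,1)}$-contribution to $\overline{\Fm}$ is matched by the same scalar $\vp_t$ that matches the $\D\widehat\Rm_{(t,-1)}$-contribution, and this is where the reduction to the binomial model intervenes, via the coincidence of $\wQ$ with $\wP$ on $\F_t$, the $\wQ$-independence of $\sigma(\Gm)$ and $\F_t$ (Proposition \ref{Martingale_measure_prop1}), and \eqref{predictable_binomial_eq}; the remaining computations are routine.
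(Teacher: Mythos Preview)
Your approach is essentially the same as the paper's: compute $\D\St_t$, rewrite it in the $\widehat\Rm$-basis via the $(\wQ,\G)$-martingale property, compare with the Clark expansion \eqref{Clark_wQformula_eq} of $\overline\Fm$, read off $\vp_t$ from the $\D\widehat\Rm_{(t,-1)}$-coefficient, and build $\alpha$ from the self-financing recursion. The paper in fact introduces the auxiliary ratio $\pi_t=\vp_t\St_{t-1}/\Vt_{t-1}(\psi)$ before matching, but this is cosmetic.

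Two points of comparison. First, you are more explicit than the paper about the genuine obstacle: with three one-step states under $\wQ$ there are \emph{two} independent Clark coefficients per time step but only \emph{one} scalar $\vp_t$, so matching the $\D\widehat\Rm_{(t,-1)}$-coefficient does not automatically match the $\D\widehat\Rm_{(t,1)}$-coefficient. The paper simply matches the $(t,-1)$-coefficient and asserts the resulting strategy has terminal value $\Fm$, without checking the other coefficient; your invocation of \eqref{predictable_binomial_eq} flags the right mechanism, but note that \eqref{predictable_binomial_eq} is proved under $\wQb$ in the \emph{binomial} model, so you would still need to argue carefully that the predictable representation transfers to $(\Omega,\cA,(\G_t)_{t\in\Tc},\wQ)$ in the jump-binomial setting. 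Second, the paper includes a step you omit: it checks that $\gE_{\wQ^{\Um}}[\,\cdot\mid\G_{t-1}]$ is the same for every $\Um\in\mathscr U^{\Bi,\G}$ (since $\Um$ is $\G_0$-measurable), so the formula for $\vp_t$ is independent of the choice of martingale-preserving measure.
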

\begin{remark}
Consider the case $\Gm=\mathbf 1_{\{\Sm_T\in[c,d]\}}$. By applying the hedging formula to $\Fm=\widehat{\Vm}_{t}^{\G,\mathbf{log}}$ ($t\in\id{1}{T-1}$) given by \eqref{Eq_opt_insider_log}, we get insider's optimal strategy until time $t$. In the same vein as for the drift of information (see \eqref{Eq_dirft_ex1} and  \eqref{Eq_dirft_ex2}), we can prove that $(\varphi_s)_{s\in\id{1}{t}}$ writes in terms of the function $F$ defined by \eqref{Eq_def_CDF} and its "partial derivatives" with respect to a jump at time $s$ in the directions $\pm1$: heuristically, it seems that the insider uses her additional information to learn about the directions of the underlying jump process and adjusts her portfolio accordingly. 
\end{remark}

\section{Conclusion and perspectives}
In this paper, we have explored various aspects of insider trading in a jump-binomial model of the financial market. This constitutes a discrete-time incomplete market model and emerges as a novel representation of the classical trinomial market model as a volatility model. It is based on a marked binomial process that acts as the sequence of i.i.d. random variables underlying the original trinomial model. All the results were established at a lower cost by using the volatility structure of the jump-binomial model and the stochastic analysis tools provided for marked binomial processes in \cite{Halconruy_2021_binomial}.\\

We presented our results according to the two perspectives we addressed successively.
First, having in mind to quantify the benefit that the insider gains from using the additional information available to her, we provided new explicit formulas for the expected additional utility (logarithmic, exponential, power) compared to an ordinary agent. We interpreted the measure of the benefit obtained in the context of information theory, connecting it to the entropy of the additional information.
Second, we investigated the impact of considering the insider's information level instead of the ordinary agent's. Specifically, we provided a novel interpretation of the information drift that characterizes the preservation of martingales under a change in filtration. Additionally, we explicitly computed the optimal hedging strategy for the insider. Both results rely on a recent version of the Malliavin calculus developed for marked binomial processes in \cite{Halconruy_2021_binomial}.\\

Two other intriguing paths of exploration emerge to extend the trajectory of this paper.
A first and natural question would be to rule on \textit{arbitrage opportunities} for the insider, i.e., to investigate whether the additional information at the insider's disposal allows her to make profits without taking risks. This fundamental question, which remains unexplored in this paper, has been examined in prior research. In a discrete-time setting, the reader can refer, to the works of Choulli and Deng \cite{Choulli_Deng_2018} in a progressive enlargement setting, to the work of Blanchet-Scalliet, Hillairet and Jiao \cite{Blanchet_Hillairet_Jiao} for a \textit{successive} enrichment by a family of enlargement of filtrations, and to the works of Burzoni, Frittelli and Maggis in \cite{Burzoni_Frittelli_Maggis_2017}, in the frame of \textit{uncertainty} models (without a unique probability reference measure). 
\\ 
On the other hand, all our results hold under the assumption of minimal impact from the insider's trading decisions on price evolution. An interesting extension would be to explore market models where the insider's actions directly influence the agent's decision-making process. We could imagine letting the dynamics of the risky asset depend on the insider's strategy, in the same spirit as Kohatsu-Higa and Sulem \cite{Kohatsu_Sulem}. This investigation could also raise questions about the existence of potential partial equilibrium, as defined by Hata and Kohatsu-Higa \cite{Hata_Kohatsu}.

\appendix

\section{Proofs}
\begin{proof}[\textit{Proof of Proposition \ref{Opt_port_ag_ter_T_prop}}] As described in our procedure, let us start by solving the optimization problem in the one-period case $T=1$. We use Theorem 3.1.3 of Delbaen, Schachermayer \cite{Delbaen_Schachermayer}. Consider the dual problem  \eqref{dual_optTerAg_eq}  associated to \eqref{optTerAg_eq} in one-period case ($T=1$), i.e.,
\begin{equation*}
\Psi_T^{\Bi,u}(y)=\gE_{\wPb}\Big[v^{u}\Big(y\frac{d\wPb}{d\gP}\Big)\Big],
\end{equation*}
where $v^{u}$ denotes the conjugate function of $u$.
Recall and translate the results of Delbaen and Schachermayer \cite{Delbaen_Schachermayer}, Theorem 3.1.3 into our frame in the case by taking $\wPb$ as the unique martingale measure equivalent to $\gP$. The solution of $\Phi_T^{\Bi,u}(x)$ is the portfolio whose discounted value $\widehat{\Vm}_{T}^{\F,u}$ is
\begin{equation}\label{Del_Scha_general_opt_formula}
\widehat{\Vm}_{T}^{\F,u}=I\left(y\frac{d\wPb}{d\gPb}\right)
\end{equation}
where $I$ is the function $I=-(v^{u})'$. Moreover,  $x$ and $y$ are related via the relations $(\Phi_T^{\Bi,u})'(x)=y$ or equivalently $x=-(\Psi_T^{\Bi,u})'(y)$.
 Then, it is enough to compute $(\Psi_1^{\Bi,u})'(y)$ to get an explicit relation between $x$ and $y$ and to deduce $\widehat \Vm_{T}^{\F,u}$. \\
Denote $ p_{0}=\gPb(\{\eta(T,\cdot)=0\})$, $\widehat p_{0}=\wPb(\{\eta(T,\cdot)=0\})$ and for $k\in\{-1,1\}$, $p_{k}=\gPb(\{\eta(T,k)=1\})$, $\widehat p_{k}=\wPb(\{\eta(T,k)=1\})$.
\vspace{3pt}\\
\textbf{\underline{Logarithmic utility}}: For $y\in\R_+^*$,
\begin{align*}
\Psi_T^{\Bi,\log}(y)
&=-1-\sum_{k\in \{-1,0,1\}}p_k\log\Big(y\frac{\widehat{p}_k}{p_k}\Big)\\
&=-1-\log(y)+\mathfrak D_{\F_T}(\gPb||\wPb).
\end{align*}
Then $x=-\Psi_T'(y)=1/y$ and the result follows by using \eqref{Del_Scha_general_opt_formula} with $I(\cdot)=1/(\cdot)$.\\ 
\textbf{\underline{Exponential utility}}: For $y\in\R_+^*$, 
\begin{align*}
\Psi_T^{\Bi,\exp}(y)
&=\sum_{k\in \{-1,0,1\}}y\frac{\widehat{p}_k}{p_k}\Big(\log\Big(y\frac{\widehat{p}_k}{p_k}\Big)-1\Big)p_k\\
&=y\log(y)-y+y\mathfrak D_{\F_T}(\wPb||\gPb).
\end{align*}
Then $x=-\Psi_T'(y)=-\log(y)-\mathfrak D_{\F_T}(\wPb||\gPb)$ and then $y=\exp(-x-\mathfrak D_{\F_T}(\wPb||\gPb))$. The result follows by using \eqref{Del_Scha_general_opt_formula} with $I=-\log$.\\
\textbf{\underline{Power utility}}: For $y\in\R_+^*$,
\begin{align*}
\Psi_T(y)
&=-\frac{1}{\beta}\sum_{k\in \{-1,0,1\}}p_ky^{\beta}\Big(\frac{\widehat{p}_k}{p_k}\Big)^{\beta}\\
&=-\frac{y^{\beta}}{\beta}\gE\big[\widehat \Lm^{\beta}\big]
\end{align*}
where $\widehat \Lm=d\wPb/d\gPb$.
Then $x=-\Psi_T'(y)=y^{\beta-1}\gE\big[\widehat \Lm^{\beta}\big]$ and the result follows by using \eqref{Del_Scha_general_opt_formula} with $I(y)=y^{\beta-1}$. \\
As described in the procedure, (still considering $T=1$) we deduce the optimal discounted value of the portfolio in the jump-binomial model by replacing everywhere needed $\gPb$ and $\wPb$ respectively by $\gP$ and $\wP$. \\
To extend the result to the multi-period case ($T\geqslant 2$), we define for all $s,t\in\id{0}{T}$ such that $s<t$, $\overline{\F}^{s,t}=(\F_r)_{s+1\pp r\pp t}$. The expression of $\Phi_t^{\F,u}(x)$ can be deduced from the identity $\Theta_{t}^{\F,u}(x)=\Phi_{T-t}^{\F,u}(x)$, together with the solution of the following induction system
\begin{displaymath}\label{Theta_def_eq}
%(\mathcal S_{\Theta})
\left\{
\begin{array}{rcl}
\Theta_T^{\F,u}(x)&=&u(x)\vspace{2pt}\\
\Theta_{t-1}^{\F,u}(x)&=&\underset{\psi\,\in\sS_{\overline{\F}^{t-1,t}}(x)}\sup\,\esp{\Theta_t^{\F}(x+\varphi_t\D \St_{t})}\; ; \; t\in\id{1}{T},
\end{array}
\right.
\end{displaymath}
where the supremum is taken over the strategies $\psi=(\alpha,\varphi)\in\sS_{\overline{\F}^{t-1,t}}(x)$ and $\varphi\in\R$. For $t=T-1$, since the $\D\Sm_t$ are independent,
\begin{align*}
\Theta_{T-1}^{\F,u}(x)
&=\underset{\psi\,\in\sS_{\overline{\F}^{T-1,T}}(x)}\sup\,\esp{\Theta_T^{\F}\left(x+\varphi\,\D \St_{T}\right)\,\Big|\,\f{T-1}}\\
&=\underset{\psi\,\in\sS_{\overline{\F}^{0,1}}(x)}\sup\,\gE_{\gP^{1,T}}\big[{\log( x+\varphi\,\D \St_{1})}\big],
\end{align*}
where $\gP^{1,T}$ is the probability measure on $(\Omega,\F_1)$ such that $\gP^{1,T}(\{\eta(1,\cdot)=0\})=\gP(\{\eta(T,\cdot)=0\})$ and $\gP^{1,T}(\{\eta(1,k)=1\})=\gP(\{\eta(T,k)=1\})$ for $k\in\{-1,1\}$. For any $s\in\id{1}{T-1}$, we get $\Theta_{s}^{\F,u}(x)$ by downward induction. Last, we obtain $\Phi_{t}^{\F,u}(x)=\Theta_{s}^{\F,u}(x)$ by letting $t=T-s$.
Hence the result.
\end{proof}
\begin{proof}[Proof of Lemma \ref{Lem_DetltaS}]
Let us first note that for any $t\in\id{0}{T}$, the $\sigma$-algebra $\G_{t}$ is generated by the set
\begin{equation*}
\{\B\cap\{\Gm\in \Cm\}\; ; \; \B\in\F_{t},\, \Cm\in\mathscr G\}.
\end{equation*}
By \textcolor{teal}{\textbf{Fact 4}}, we have for $t\in\id{0}{T-1}$,
\begin{equation}\label{Eq_wQ_prop_independence}
\wQ(\B_t\cap \{\Gm\in \Cm\})=\wQ(\B_t)\wQ(\{\Gm\in \Cm\})=\wP(\B_t)\gP(\{\Gm\in \Cm\}).
\end{equation}
For any $t\in\id{1}{T-1}$, let $\A_{t-1}=\B_{t-1}\cap\{\Gm\in\Cm\}\in\G_{t-1}$ where $\B_{t-1}\in\F_{t-1}$ and $\Cm\in\sG$. Assume that $\gP(\A_{t-1})>0$. Since $1/\widehat\q_t^{\Gm}$ is positive for all $t\in\id{0}{T-1}$, we have $\wQ(\A_{t-1})>0$.  For any $t\in\id{1}{T-1}$,
\begin{align*}
\wQ(\A_{t-1})\wQ(\{\eta(t,1)=1\}|\A_{t-1})
&=\gE_{\wQ}\big[\car_{\{\eta(t,1)=1\}}\car_{\B_{t-1}}\car_{\{\Gm\in\Cm\}}\big]\\
&=\wQ(\{\eta(t,1)=1\}\cap\B_{t-1})\wQ(\{\Gm\in\Cm\})\\
&=\wP(\{\eta(t,1)=1\})\wQ(\B_{t-1})\wQ(\{\Gm\in\Cm\})\\
&=\wQ(\{\eta(t,1)=1\})\wQ(\A_{t-1})=\wP(\{\eta(t,1)=1\})\wQ(\A_{t-1})
\end{align*}
where we used \eqref{Eq_wQ_prop_independence} and that $\eta(t,1)$ is independent of $\F_{t-1}$. The penultimate equality means that $\{\eta(t,k)=1\}$ and $\G_{t-1}$ are independent under $\wQ$. Since $\G_{t-1}$ is generated by the set $\{\B\cap \Cm \; ; \; \B\in\F_{t-1},\, \Cm\in\mathscr G\}$, the property extends to $\G_{t-1}$ and holds $\gP$-almost surely (we have only considered sets $\A_{t-1}$ whose $\gP$-measure is non-zero) via monotone class theorem. This provides the first part of the statement as $\wP(\{\eta(t,1)=1\})=\lambda\widehat p$.
A similar statement can be obtained in the case $k=-1$. As a consequence, for any $t\in\id{1}{T-1}$,
\begin{align*}
\frac{\D\St_t}{\St_{t-1}}
\nonumber
&=\frac{b-r}{1+r}\car_{\{\eta(t,1)=1\}}+\frac{a-r}{1+r}\car_{\{\eta(t,-1)=1\}}\\
\nonumber
&=\frac{b-r}{1+r}\Big[\car_{\{\eta(t,1)=1\}}-\wQ(\{\eta(t,1)=1\}|\G_{t-1})\Big]+\frac{a-r}{1+r}\Big[\car_{\{\eta(t,-1)=1\}}\\
&\nonumber-\wQ(\{\eta(t,-1)=1\}|\G_{t-1})\big]
+\bigg[\frac{b-r}{1+r}\wQ(\{\eta(t,1)=1\}|\G_{t-1})+\frac{a-r}{1+r}\wQ(\{\eta(t,-1)=1\}|\G_{t-1})\bigg]\\
&=\frac{b-r}{1+r}\big[\car_{\{\eta(t,1)=1\}}-\lambda\widehat p\big]+\frac{a-r}{1+r}\big[\car_{\{\eta(t,-1)=1\}}-\lambda(1-\widehat p)\big]=\frac{b-r}{1+r}\Delta\widehat\Zm_{(t,1)}+\frac{a-r}{1+r}\Delta\widehat\Zm_{(t,-1)}.
\end{align*}
 Hence the result.
\end{proof}
\begin{proof}[Proof of Theorem \ref{Opt_port_ins_ter_T_prop}] 
As for the ordinary agent, we can deduce insider's maximum expected utility by solving the associated dual problem in the binomial model. In this one, insider's optimal portfolios are simply obtained (at time $t\in\id{1}{T-1}$) from agent's by replacing everywhere needed $\F_t$, $\wPb$  respectively by $\G_t$ and $\widehat\Q^{\Bi,u}$ (for $u\in\{\mathbf{log},\mathbf{exp},\mathbf{pow}\}$) with
\begin{equation*}
\widehat\Q^{\Bi,\log}:=\Um^{\mathbf{log}}*\wQ^{\Bi},\quad \widehat\Q^{\Bi,\mathbf{exp}}:=\Um^{\mathbf{exp}}*\wQ^{\Bi} \quad\text{and}\quad \widehat\Q^{\Bi,\mathbf{pow}}:=\Um^{\mathbf{pow}}*\wQ^{\Bi},
\end{equation*}
where $\Um^{\mathbf{log}},\Um^{\mathbf{exp}}$ and $\Um^{\mathbf{pow}}$ are defined by \eqref{U_opt_ins_def_eq}.
We get then:\vspace{2pt}\\
\noindent
\underline{\textbf{Logarithmic utility}}: 
$\widehat{\Vm}_{t}^{\Bi,\G,\mathbf{log}}=x\cdot\dfrac{d\gPb}{d\wQ^{\Bi,\mathbf{log}}}\bigg|_{\G_t}$.
\vspace{2pt}\\
\noindent
\underline{\textbf{Exponential utility}}: $\widehat{\Vm}_{t}^{\Bi,\G,\mathbf{exp}}=x+\mathfrak D_{\G_t}(\widehat\Q^{\Bi,\mathbf{exp}}||\gPb)+\log\bigg(\dfrac{d\gPb}{d\widehat\Q^{\Bi,\mathbf{exp}}}\Big|_{\G_t}\bigg)$.
\vspace{2pt}\\
\noindent 
\underline{\textbf{Power utility}}: $\widehat{\Vm}_{t}^{\Bi,\G,\mathbf{pow}}=x\cdot\gE\Big[\Big(\dfrac{d\widehat\Q^{\Bi,\mathbf{pow}}}{d\gPb}\Big|_{\G_t}\Big)^{\beta}\Big]^{-1}\cdot\bigg(\dfrac{d\widehat\Q^{\Bi,\mathbf{pow}}}{d\gPb}\Big|_{\G_t}\bigg)^{\beta-1},$ where $\beta=\alpha/(\alpha-1)$.\vspace{2pt}\\
Then, the results can be stated in the jump-binomial model by replacing everywhere needed $\gPb$ and  $\widehat\Q^{\Bi,u}$ respectively by $\gP$ and $\widehat\Q^{u}$ with
\begin{equation*}
\widehat\Q^{u}=(1-\lambda)\gPc+\lambda \widehat\Q^{\mathfrak b,u}=(1-\lambda)\gPc+\lambda(\Um^{u}*\wQb).
\end{equation*}
Hence the result.
\end{proof}
\begin{proof}[Proof of Theorem \ref{additional_utility_th}]
The following explicit expressions for insider's $u$-additional expected utility are deduced from Theorem \ref{Opt_port_ins_ter_T_prop}.\\
\textbf{\underline{Logarithmic utility}}: For $t\in\id{1}{T-1}$,
\begin{align*}
\Phi_t^{\G,\mathbf{log}}(x)-\Phi_t^{\F,\mathbf{log}}(x)
&=\gE\bigg[\log\Big(x\cdot\frac{d\gP}{d\wQ^{\mathbf{log}}}\bigg|_{\G_t}\Big)-\log\Big(x\cdot\frac{d\gP}{d\wP}\bigg|_{\F_t}\Big)\bigg]\\
&=\gE\bigg[\log\Big(\frac{d\wP}{d\wQ^{\mathbf{log}}}\bigg|_{\G_t}\Big)\bigg]\\
&=\gE\bigg[\log\Big(\frac{d\wPb}{d\wQ^{\Bi,\mathbf{log}}}\bigg|_{\G_t}\Big)\bigg]\\
&=\gE_{\gPb}\bigg[\log\Big(\frac{1}{\Um^{\mathbf{log}}}\Big)\bigg]+\gE_{\gPb}\Big[\log\big(\q_t^{\Bi,\Gm}\big)\Big],
\end{align*}
where the third line comes from the definitions $\wP=(1-\lambda)\gP^{\mathfrak c}+\lambda \gPb$ and $\widehat\Q^{\mathbf{log}}=(1-\lambda)\gP^{\mathfrak c}+\lambda\Um^{\mathbf{log}}*\wQb$.
Moreover, by definition of $\q^{\Bi,\Gm}$ and $\q^{\Gm}$ and since $\Gamma$ is finite,
\begin{align*}
\gE_{\gPb}\big[\log(\q^{\Bi,\Gm}_t)\big]
&=\esp{\log(\q^{\Gm}_t)}\\
&=\esp{\sum_{\cm\in\Gamma}\log(\q_t^{\cm})\,\gP(\{\Gm=\cm\}\,|\,\F_t)}\\
&=\esp{\sum_{\cm\in\Gamma}\log\big(\gP(\{\Gm=\cm\}\,|\,\F_t)\big)\,\gP(\{\Gm=\cm\}\,|\,\F_t)}\\
&\hspace{50pt}-\sum_{\cm\in \Gamma}\log\big(\gP(\{\Gm=\cm\})\big)\esp{\esp{\car_{\{\Gm=\cm\}}\,|\,\F_t}}\\
&=\esp{\sum_{\cm\in \Gamma}\log\big(\gP(\{\Gm=\cm\}\,|\,\F_t)\big)\gP(\{\Gm=\cm\}|\F_t)}\\
&\hspace{150pt}-\sum_{\cm\in \Gamma}\log\big(\gP(\{\Gm=\cm\})\big)\,\gP(\{\Gm=\cm\})\\
&=\mathrm{Ent}(\Gm)-\mathrm{Ent}(\Gm\,|\,\F_t),
\end{align*} 
where we get the second equality by conditioning on $\F_t$.  \\
\textbf{\underline{Exponential utility}}: For $t\in\id{1}{T-1}$,
\begin{align*}
\Phi_t^{\G,\mathbf{exp}}(x)-\Phi_t^{\F,\mathbf{exp}}(x)
&=-\exp(-x-\mathfrak D_{\G_t}(\widehat\Q||\gP))\cdot\gE\bigg[\frac{d\widehat\Q^{\mathbf{exp}}}{d\gP}\Big|_{\G_t}\bigg]\\
&\hspace{120pt}+\exp(-x-\mathfrak D_{\F_t}(\widehat\gP||\gP))\cdot\gE\bigg[\frac{d\widehat\gP}{d\gP}\Big|_{\F_t}\bigg]\\
&=-\exp(-x-\mathfrak D_{\G_t}(\widehat\Q||\gP))\cdot\gE\bigg[\frac{d\widehat\Q^{\mathbf{exp}}}{d\gP}\Big|_{\G_t}\bigg]\\
&\hspace{190pt}+\exp(-x-\mathfrak D_{\F_t}(\widehat\gP||\gP)),
\end{align*}
where, since $\wP$ is equivalent to $\gP$, $\gE[(d\gP/d\wP)|_{\F_t}]=1$. 
Using that $\gP=(1-\lambda)\gP^{\mathfrak c}+\lambda\gPb$ and $\widehat\Q=(1-\lambda)\gP^{\mathfrak c}+\lambda \Um^{\mathbf{exp}}*\wQ^{\Bi}$ where $ \Um^{\mathbf{exp}}$ satisfies $\gE_{\wQb}\big[\Um^{\mathbf{exp}}\big]=1$, we can check that
\begin{equation*}
\gE\bigg[\frac{d\widehat\Q^{\mathbf{exp}}}{d\gP}\Big|_{\G_t}\bigg]=1.
\end{equation*}
\\
\textbf{\underline{Power utility}}: For $t\in\id{1}{T-1}$, since $\alpha(\beta-1)=\beta$, 
\begin{align*}
\Phi_t^{\G,\pow}(x)-\Phi_t^{\F,\pow}(x)
&=\frac{x^{\alpha}}{\alpha}\gE\Big[\Big(\frac{d\widehat\Q^{\pow}}{d\gP}\Big|_{\G_t}\Big)^{\beta}\Big]^{-\alpha}\cdot\gE_{\gP}\Big[\Big(\frac{d\widehat\Q^{\pow}}{d\gP}\Big|_{\G_t}\Big)^{\beta}\Big]\\
&\hspace{100pt}-\frac{x^{\alpha}}{\alpha}\gE\Big[\Big(\frac{d\wP}{d\gP}\Big|_{\F_t}\Big)^{\beta}\Big]^{-\alpha}\cdot\gE\Big[\Big(\frac{d\wP}{d\gP}\Big|_{\F_t}\Big)^{\beta}\Big]\\
&=\frac{x^{\alpha}}{\alpha}\gE\bigg[\Big(\frac{d\widehat\Q^{\pow}}{d\gP}\Big|_{\G_t}\Big)^{\beta}\bigg]^{1-\alpha}-\frac{x^{\alpha}}{\alpha}\gE\Big[\big(\widehat\Lm_t\big)^{\beta}\Big]^{1-\alpha}.
\end{align*}
Hence the result.
\end{proof}
\begin{proof}[Proof of Corollary \ref{additional_utility_cor}] Let $t\in\id{1}{T-1}$. We deduce the following bounds from Theorem \ref{additional_utility_th}.
\\
\textbf{\underline{Exponential utility}}: There exists $\kappa\in(0,1)$ such that
\begin{align*}
\mathcal U_t^{\mathbf{exp}}(x)
&=\lambda\exp\big(-x\big[(1-\kappa)\mathfrak D_{\G_t}(\widehat\Q||\gP)+\kappa \mathfrak D_{\F_t}(\widehat\gP||\gP)\big]\big)\big(\mathfrak D_{\F_t}(\widehat\gP||\gP)-\mathfrak D_{\G_t}(\widehat\Q||\gP)\big)\\
&=\lambda\exp\big(-x\big[(1-\kappa)\mathfrak D_{\G_t}(\widehat\Q||\gP)+\kappa \mathfrak D_{\G_t}(\widehat\gP||\gP)\big]\big)\mathfrak D_{\G_t}(\widehat\gP||\widehat\Q)\\
&\leqslant\lambda\exp\big(-x\mathfrak D_{\G_t}((1-\kappa)\widehat\Q+\kappa\widehat\gP||\gP)\mathfrak D_{\G_t}(\wP||\wQ)\\
&
=\exp\big(-x\mathfrak D_{\G_t}(\mathbf{M}_\kappa||\gP)\big)\big[\mathrm{Ent}(\Gm)-\mathrm{Ent}(\Gm\,|\,\F_t)-\gE_{\gPb}[\log(\Um^{\mathbf{log}})]\big]
\end{align*}
where we set $\mathbf{M}_\kappa:=(1-\kappa)\widehat\Q+\kappa\widehat\gP$ and we have used that the map $(\wP,\gP)\mapsto \mathfrak D_{\G_t}(\wP||\gP)$ is jointly convex.\\
\textbf{\underline{Power utility}} For the sake of readability, let us note $\widehat\Lm_t^{\mathbf{\pow}}=(d\widehat\Q^{\pow}/d\gP)|_{\G_t}$.
There exists $\kappa\in(0,1)$ such that
\begin{align*}
\mathcal U_t^{\mathbf{pow}}(x)
&\leqslant\frac{x^{\alpha}}{\alpha}\Big(\gE\big[\big(\widehat\Lm_t^{\mathbf{\pow}}\big)^{\beta}\big]-\gE_{\gP}\big[\big(\widehat\Lm_t\big)^{\beta}\big]\Big)^{1-\alpha}\\
&= \frac{x^{\alpha}}{\alpha}\Big(\gE\big[\big(\exp(\beta\log(\widehat\Lm_t^{\mathbf{\pow}})\big)\big]-\gE\big[\exp\big(\beta\log(\widehat\Lm_t)\big)\big]\Big)^{1-\alpha}\\
&\leqslant\frac{|\beta|^{1-\alpha}x^{\alpha}}{\alpha}\Big\|\exp\big(\beta\log(\widehat\Lm_t^{\mathbf{pow},\kappa})\big)\Big\|_{\infty}^{1-\alpha}\Big(\gE\big[\log(\widehat\Lm_t)\big]-\gE\big[\log(\widehat\Lm_t^{\mathbf{pow}})\big]\Big)^{1-\alpha}\\
&=\frac{|\beta|^{1-\alpha}x^{\alpha}}{\alpha}\Big\|\big(\widehat\Lm_t^{\mathbf{pow},\kappa}\big)^{\beta}\Big\|_{\infty}^{1-\alpha}\big[\mathfrak D_{\G_t}(\wP||\wQ^{\mathbf{pow}})\big]^{1-\alpha}\\
&=\frac{|\beta|^{1-\alpha} x^{\alpha}}{\alpha}\Big\|\big(\widehat\Lm_t^{\mathbf{pow},\kappa}\big)^{\beta}\Big\|_{\infty}^{1-\alpha}\big[\mathrm{Ent}(\Gm)-\mathrm{Ent}(\Gm\,|\,\F_t)-\gE_{\gPb}[\log(\Um^{\mathbf{pow}})]\big]^{1-\alpha}
\end{align*}
where we have defined $\widehat\Lm_t^{\mathbf{pow},\kappa}$ as the random variable such that $\log(\widehat\Lm_t^{\mathbf{pow},\kappa}):=(1-\kappa)\log(\widehat\Lm_t)+\kappa\log(\widehat\Lm_t^{\mathbf{pow}})$ and used that $x\in\R_+^{*}\mapsto x^{1-\alpha}$ is $(1-\alpha)$-H\"older continuous.
\end{proof}
\begin{proof}[\textit{Proof of Proposition \ref{Drift_Malliavin_Th}}]
Consider the process $\mu^{\G}$ defined by \eqref{drift} by taking $\Xm=\overline{\Ym}$.
The proof directly derives from the Clark-Ocone formula \eqref{Clark_formula_mart_eq} applied to $\widehat\q^{\cm}=\mathbf q^{\cm}/\widehat\Lm$ (with $\cm\in\Gamma$) that is a $(\wP,\F)$-martingale on $\id{1}{T-1}$. Taking $s=t-1$ provides
\begin{equation*}
\D \widehat\q_t^{\cm}=\widehat\q_{t}^{\cm}-\widehat\q_{t-1}^{\cm}=\sum_{\ell\in\{-1,1\}}\gE_{\wP}[\Dm_{(t,\ell)} \,\widehat\q_t^{\cm}\,|\,\f{t-1}]\,\D \widehat\Rm_{(t,\ell)}.
\end{equation*}
As stated in Lemma 1.4 of  Blanchet \textit{et al.} \cite{Blanchet_Jeanblanc_Romero_2019}, for two $\F$-adapted processes $\Um$ and $\Km$, and a probability measure $\gP$, $\langle \Um,\Km\rangle_0^\gP=0$ and $\D\langle \Um,\Km\rangle_t^{\gP}=\mathbf E_\gP[\D\Um_t\D\Km_t\,|\,\F_{t-1}]$ for all $t\in\id{1}{T}$, where $\langle\Um,\Km\rangle^{\gP}$ is the angle bracket, i.e., the $\F$-predictable process such that $(\Um_t\Km_t-\langle\Um,\Km\rangle_t^{\gP})_{t}$ is a $(\gP,\F)$-martingale. By \eqref{Eq_DetltaS},
\begin{equation*}
\Delta\St_t=\frac{1}{\St_{t-1}}\sum_{k\in\{-1,1\}}c_k\D \widehat\Zm_{(t,k)}
\end{equation*}
with $c_1=[b-r]/[1+r]$ and $c_{-1}=[a-r]/[1+r]$.
Then we get, for any $t\in\id{1}{T-1}, \cm\in\Gamma$,
\begin{align*}
\D\textless \St,\widehat\q^{\cm}\textgreater_t^{\wP}
&=\frac{1}{\St_{t-1}}\mathbf E_{\wP}\Bigg[\sum_{k\in\{-1,1\}}c_k\D \widehat\Zm_{(t,k)}\,\sum_{\ell\in\{-1,1\}}\mathbf E_{\wP}[\Dm_{(t,\ell)} \,\widehat\q_t^{\cm}|\f{t-1}]\,\D\widehat\Rm_{(t,\ell)}\Big|\F_{t-1}\Bigg]\\
&=\frac{1}{\St_{t-1}}\sum_{k\in\{-1,1\}}\sum_{\ell\in\{-1,1\}}c_k\mathbf E_{\wP}[\Dm_{(t,\ell)} \,\widehat\q_t^{\cm}|\f{t-1}]\mathbf E_{\wP}\big[\D\widehat\Zm_{(t,k)}\D \widehat\Rm_{(t,\ell)}\big]\\
&=\frac{1}{\St_{t-1}}\sum_{\ell\in\{-1,1\}}a_{\ell}\mathbf E_{\wP}[\Dm_{(t,\ell)} \,\widehat\q_t^{\cm}|\f{t-1}],
\end{align*}
where we define the family $\{a_{\ell},\; \ell\in\{-1,1\}\}$ by $a_{\ell}=\sum_{k\in\{-1,1\}}c_k\gE_{\wP}[\D\widehat \Zm_{(1,k)}\D\widehat \Rm_{(1,\ell)}]$. Hence the result. 
\end{proof} 
\begin{proof}[Proof of Proposition \ref{Prop_drift_example}]
For any $t\in\id{0}{T-1}$,
\begin{align*}
\gP(\{c\leqslant \Sm_T \leqslant d\}\,|\,\F_t)
&=\gP\Big(\Big\{\frac{c}{\Sm_t}\leqslant \frac{\Sm_T}{\Sm_t} \leqslant \frac{d}{\Sm_t}\Big\}\,\Big|\,\F_t\Big)\\
&=\gP\Big(\Big\{\frac{c}{k}\leqslant \Sm_{T-t} \leqslant \frac{d}{k}\Big\}\Big)\Big|_{k=\Sm_t}=F\Big(T-t,\frac{c}{\Sm_{t}},\frac{d}{\Sm_t}\Big),
\end{align*}
where we have used that the ratios $\Sm_t/\Sm_{t-1}$ are i.i.d. so that $\Sm_T/\Sm_{t}$ has the same law as  $\Sm_{T-t}$. Then, since $\Gm=\car_{\{\Sm_T\in[c,d]\}}$,
\begin{equation*}
\q_t^{1}=\frac{\gP(\{c\leqslant \Sm_T \leqslant d\}\,|\,\F_t)}{\gP(\{c\leqslant \Sm_T \leqslant d\})}=\frac{F\big(T-t,c/\Sm_{t},d/\Sm_t\big)}{F(T,c,d)}\;\;\text{and}\;\;\q_t^{0}=\frac{1-F\big(T-t,c/\Sm_{t},d/\Sm_t\big)}{1-F(T,c,d)}.
\end{equation*}
Moreover, for any $t\in\id{1}{T-1}$, as
\begin{equation*}
\widehat\Lm_t=\prod_{s=1}^{t}\bigg[\car_{\{\eta(s,\cdot)=0\}}+\frac{p}{\widehat p}\car_{\{\eta(s,1)=1\}}+\frac{1-p}{1-\widehat p}\car_{\{\eta(s,-1)=1\}}\bigg],
\end{equation*}
we have
\begin{align*}
\Dm_{(t,1)} \,\widehat\q_t^{1}
=\widehat\q_t^{1}(\pi_t(\eta)+\delta_{(t,1)})-\widehat\q_t^{1}(\pi_t(\eta))&=\frac{\q_t^{1}(\pi_t(\eta)+\delta_{(t,1)})}{\widehat\Lm_t(\pi_t(\eta)+\delta_{(t,1)})}-\frac{\q_t^{1}(\pi_t(\eta))}{\widehat\Lm_t(\pi_t(\eta))}\\
&=\frac{p}{\widehat p}\frac{F\big(T-(t-1),c/[(1+b)\Sm_{t-1}],d/[(1+b)\Sm_{t-1}]\big)}{\,\widehat\Lm_{t-1}}\\
&\quad-\frac{F\big(T-(t-1),c/[(1+r)\Sm_{t-1}],d/[(1+r)\Sm_{t-1}]\big)}{\widehat\Lm_{t-1}}.
\end{align*}
Similarly,
\begin{multline*}
\Dm_{(t,-1)} \,\widehat\q_t^{-1}=\frac{1-p}{1-\widehat p}\frac{F\big(T-(t-1),c/[(1+a)\Sm_{t-1}],d/[(1+a)\Sm_{t-1}]\big)}{\,\widehat\Lm_{t-1}}\\
-\frac{F\big(T-(t-1),c/[(1+r)\Sm_{t-1}],d/[(1+r)\Sm_{t-1}]\big)}{\widehat\Lm_{t-1}}.
\end{multline*}
Hence the result. 
\end{proof}

\begin{proof}[Proof \textit{of Theorem \ref{Hedging_agent_th}}]
Consider a $\G_{T-1}$-measurable random variable $\Fm$. \\
\underline{\textbf{Step 1: Identification of the hedging strategy}}
As a reminder, the strategy $\psi=(\alpha,\varphi)$ is self-financing if and only if the condition \eqref{Self_financing_cond} is satisfied for all $t\in\id{1}{T-1}$ so that $\Vm_{t-1}(\psi)=\alpha_{t}\A_{t-1}+\vp_{t}\Sm_{t-1}$. Let $\varphi_0=0$. Assume the existence of a $\G$-admissible strategy $\psi$ such that $\Vm_0(\psi)=x$ and which final value satisfies 
\begin{equation*}
\Vm_{T-1}(\psi)=\alpha_{T-1}\,\A_{T-1}+\vp_{T-1}\,\Sm_{T-1}=\Fm.
\end{equation*}
\underline{Step 1} As a reminder by \eqref{Eq_DetltaS}, for $t\in\id{1}{T-1}$,
\begin{equation*}
\D\St_t=\St_{t-1}\bigg[\frac{b-r}{1+r}\D\widehat\Zm_{(t,1)}+\frac{a-r}{1+r}\D\widehat\Zm_{(t,-1)}\bigg].
\end{equation*}
%
%where the term in square brackets vanishes in the second equality since, by \textcolor{teal}{\textbf{Fact 5}} (see subsection \ref{Subsec_existing_results}) $\St$ is a $(\wQ,\G)$-martingale on $\id{0}{t}$. 
%Even if $\wQ(\{\eta(t,1)=1\}|\G_{t-1})=\wP(\{\eta(t,1)=1\})$ by \eqref{Coincide_wP_wQ_eq}, we prefer to keep the first one to remind us that we are thinking in terms of the filtration $\G$. \\
Let $\pi$ be the $\G$-predictable process such that $\pi_t=\dfrac{\vp_{t}\,\St_{t-1}}{\Vt_{t-1}(\psi)}$ for any $t\in\id{1}{T-1}$. Thus,
\begin{align*}
\Delta \overline\Vm_t(\psi)
&=\alpha_{t}\Delta \overline\A_t+\vp_{t}\Delta \St_t\\
&=\frac{\pi_t\Vt_{t-1}(\psi)}{\St_{t-1}}\Delta \St_t\\
&=\Vt_{t-1}(\psi)\pi_t\bigg[\frac{b-r}{1+r}\D\widehat\Zm_{(t,1)}+\frac{a-r}{1+r}\D\widehat\Zm_{(t,-1)}\bigg]\\
&=\Vt_{t-1}(\psi)\pi_t\bigg[\frac{b-r+\rho(a-r)}{1+r}\D\widehat\Rm_{(t,1)}+\frac{a-r}{1+r}\D\widehat\Rm_{(t,-1)}\bigg]
\end{align*}
\noindent
Then,
\begin{equation*}
\Vt_{T-1}(\psi)=\Vm_0(\psi)+\sum_{t\in\id{1}{T-1}}\Vt_{t-1}(\psi)\pi_t\bigg[\frac{b-r+\rho(a-r)}{1+r}\D\widehat\Rm_{(t,1)}+\frac{a-r}{1+r}\D\widehat\Rm_{(t,-1)}\bigg].
\end{equation*}
Recall that we have assumed $\Fm=\Vm_{T-1}(\psi)=(1+r)^{T-1}\Vt_{T-1}(\psi)$. The uniqueness of the Clark formula \eqref{Clark_wQformula_eq}, entails $\Vm_0(\psi)=\gE_{\wQ}[\Fm|\g{0}]/(1+r)^{T-1}$ and for all $t\in\id{1}{T-1}$,
\begin{align*}
\gE_{\wQ}\big[(a-r)^{-1}\Dm_{(t,-1)}\Fm\,|\,\g{t-1}\big]=\frac{(1+r)^{T-1}}{1+r}\,\Vt_ {t-1}(\psi)\pi_t=\frac{(1+r)^{T-1}}{(1+r)^t}\,\Vm_ {t-1}(\psi)\pi_t.
\end{align*}
On the one hand, we set  $\varphi_0=0$ and
\begin{equation*}
\vp_t=\frac{\Vm_{t-1}(\psi)\,\pi_t}{\Sm_{t-1}}=(1+r)^{t-T+1}\frac{\gE_{\wQ}\big[\Dm_{(t,-1)}\Fm\,|\,\g{t-1}\big]}{(a-r)\Sm_{t-1}}.
\end{equation*}
%
%Moreover, by what precedes  $e_{t,1}^{\Um_2,\theta_{\Um_2}}=e_{t,\cdot}^{\Um_1,\theta_{\Um_1}}$.  $e_{t,1}:=e_{t,1}^{\Um,\theta_\Um}=(b_t-r-(a_t-r)\rho_t^{1})$, $e_{t,-1}=$
On the other hand, let $\alpha_0=(1+r)^{-T+1}\gE_{\wQ}[\Fm|\G_0]=\gE[\Fm]$ (since $\wQ$ coincides with $\gP$ on $\sigma(\Gm)$ from \textcolor{teal}{\textbf{Fact 3}}) and for any $t\in\id{1}{T-1}$,
\begin{equation*}
\alpha_t=\alpha_{t-1}-\frac{(\varphi_t-\varphi_{t-1})\Sm_{t-1}}{\A_{t-1}}.
\end{equation*}
Reciprocally, we can check that $(\alpha,\varphi)$ defines a $\G$-admissible strategy with terminal value $\Fm$.\vspace{3pt}\\
\underline{\textbf{Step 2: Free choice of the martingale measure}}
We can check that the value of the strategy does not depend on the specific \textit{optimal} $\G$-martingale measure $\wQ\in\sP^{\mathrm{opt},\G}$ chosen. Consider two elements $\wQ^{\Um_1}$ and $\wQ^{\Um_2}$ in $\sP^{\mathrm{opt},\G}$ (defined by \eqref{Eq_insider_opt_measure_set}) of the form
\begin{equation*}
\wQ^{\Um_i}=(1-\lambda)\gPc+\lambda(\Um_{i}*\wQb)\in\sP^{\mathrm{opt},\G}, \quad i\in\{1,2\}, 
\end{equation*}
where $\Um_1,\Um_2\in\mathscr U^{\Bi,\G}$. We have
\begin{equation*}
\gE_{\wQ^{\Um_1}}[\Fm|\g{0}]=\frac{\gE_{\wQ^{\Um_2}}[(\Um_1/\Um_2)\cdot\Fm|\g{0}]}{\gE_{\wQ^{\Um_2}}[(\Um_1/\Um_2)|\g{0}]}=\gE_{\wQ^{\Um_2}}[\Fm|\g{0}],
\end{equation*}
since $\Um_1$ and $\Um_2$ are $\g{0}$-measurable. Similarly, we can prove that $\gE_{\wQ^{\Um}}\big[\Dm_{(t,-1)}\Fm\,|\,\g{t-1}\big]$ does not depend on the choice of $\wQ^{\Um}\in\sP^{\mathrm{opt},\G}$.\\
Thus, we get a couple of $\G$-predictable processes $\psi=(\alpha,\varphi)$ that satisfies the self-financing condition, with terminal value $\Fm$ and whose definition does not depend of the \textit{optimal} martingale measure chosen. 
Hence the result.
\end{proof}
\end{document}